\numberwithin{equation}{section}
\begin{document}

\title
{Unusual functorialities for weakly constructible sheaves}
\date{}

\author{Andreas Hohl and Pierre Schapira}
\maketitle

\abstract{We prove that various morphisms related to the six Grothendieck operations on sheaves become isomorphisms 
	when restricted to (weakly) constructible sheaves. To this end, we first study some properties of weakly cohomologically constructible sheaves. We then deduce several compatibilities of the six operations in the context of (weakly) $\R$-constructible sheaves.}

{\renewcommand{\thefootnote}{\mbox{}}
\footnote{The research of A.H. was funded  by the Deutsche Forschungsgemeinschaft (DFG, German Research
	Foundation), Projektnummer 465657531.}
\footnote{\emph{Key words.} constructible sheaves, six-functor formalism, ind-objects and pro-objects, subanalytic sets.}
\footnote{2020 \emph{Mathematics Subject Classification.} 32S60, 18G80, 32B20.}
\addtocounter{footnote}{-2}
}

\section{Introduction}
Let $\cor$ be a commutative unital ring of finite global dimension and 
denote by $\Derb(\cor_X)$ the bounded derived category of sheaves on a good topological space $X$.
There are some classical morphisms which are, in general, not isomorphisms, such as
\eqn
&&\rhom(F_1,F_2)\ltens F_3\to\rhom(F_1,F_2\ltens F_3),\\
&&\epb{f}G_1\ltens\opb{f}G_2\to\epb{f}(G_1\ltens G_2),
\eneqn
for $F_1,F_2,F_3\in\Derb(\cor_X)$, $G_1,G_2\in\Derb(\cor_Y)$ and $f\cl X\to Y$  a continuous map.
We will prove here that, under suitable hypotheses of (weak) constructibility, these morphisms become isomorphisms.

We prove the following results in the categories of real analytic manifolds and weakly $\R$-constructible sheaves (see Theorems~\ref{th:invim},~\ref{th:tenshom} and~\ref{th:dirim}). 

Let $f\cl X\to Y$ be a morphism of real analytic manifolds, and let $F_1, F_2, K\in  \Derb_\wRc(\cor_X)$,  $G, M\in  \Derb_\wRc(\cor_Y)$
with moreover $F_1$ being $\R$-constructible and $K,M$ locally constant.
Then we have the isomorphisms
\eqn
&& \epb{f}G\tens\opb{f}M\isoto\epb{f}(G\tens M),\\
&&\opb{f}\rhom(M,G)\isoto \rhom(\opb{f}M,\opb{f}G),\\
&&\rhom(F_1,F_2)\tens K\isoto\rhom(F_1,F_2\tens K),\\
&&\rhom(K,F_2)\tens F_1\isoto\rhom(K,F_2\tens F_1).
\eneqn
For direct images, we need slightly stronger assumptions: Consider  a morphism $f\cl \fX\to\fY$  of b-analytic manifolds (see~\cite{Sc23} for this notion) and let   $F\in \Derb_\wRc(\cor_\fX)$ be weakly $\R$-constructible up to infinity and $M$ be locally constant. We prove the isomorphisms 
\eqn
&&\roim{f}F\tens M\isoto \roim{f}(F\tens \opb{f}M),\\
&&\reim{f}\rhom(\opb{f}M,F)\isoto \rhom(M,\reim{f}F). 
\eneqn

%%%%%%%
We start by introducing the notion (implicitly already defined in~\cite[\S3.4]{KS90}) of weakly  cohomologically constructible sheaves and the full subcategory $\Derb_\wcc(\cor_X)$ of $\Derb(\cor_X)$ consisting of such objects. On a  real analytic manifold, the category $\Derb_\wcc(\cor_X)$   contains the category $\Derb_\wRc(\cor_X)$ of weakly $\R$-constructible objects.

We prove first that  $\Derb_\wcc(\cor_X)$ is triangulated. Then our main tool is that for an object $F$ of this category, for $x\in X$ and $L\in\Derb(\cor)$, one has functorial isomorphisms
\eqn
&&\rhom(L_X,F)_x\isoto \rhom(L,F_x),\quad \rsect_xF\tens L\isoto \rsect_x(F\tens L_X),
\eneqn
where $L_X$ denotes the constant sheaf associated with $L$.

The motivation for this note came through the work \cite{Ho23}, where field extensions for sheaves are considered: In this context, given a field extension $\cor \subset \corex$, there are natural functors of extension and co-extension of scalars from $\Derb(\cor_X)$ to $\Derb(\corex_X)$, which are given by $F\mapsto F\otimes \corex_X$ and $F\mapsto \rhom(\corex_X,F)$, respectively. It is important to study the question of compatibility of these functors with the six Grothendieck operations on constructible sheaves. It turns out that many of the desired functorialities of loc.~cit.\ indeed follow from the set-up developed here,  but our approach is more general in multiple aspects: We allow $\cor$ to be a (suitable) commutative ring, we only require some of the sheaves to be \emph{weakly} constructible, and we replace $\corex_X$ by any locally constant sheaf.

\paragraph{Acknowledgements} The authors warmly thank one of the referees who made the essential remark that a hypothesis asserting that some duality functor was conservative was not necessary in order to prove 
Proposition~\ref{pro:dualconserv} and, as a byproduct, Theorems~\ref{th:invim},~\ref{th:tenshom} and~\ref{th:dirim}. 

\section{Preliminaries}
In all this paper, we work in a given universe $\mathcal{U}$. All limits and colimits (in particular, products and direct sums) are assumed to be small.
Recall that $\cor$ is  a commutative unital ring of finite global dimension. 
We assume that  all topological spaces are ``good'', that is, Hausdorff, locally compact, countable at infinity and of finite flabby dimension.

For a topological space $X$ as above, 
one denotes by $\md[\cor_X]$ the Grothendieck abelian category of sheaves of $\cor$-modules and by  $\Derb(\cor_X)$ its bounded derived category.

We mainly follow the notations of~\cite{KS90}. In particular, 

\begin{itemize}
	\item
	$\omega_X$ denotes the dualizing complex and $\RD_X$  the duality functor 
	$\rhom(\scbul,\omega_X)$,
	\item
	$\RD$ denotes the duality functor on $\Derb(\cor)$,
	\item
	$a_X\cl X\to \{\rmpt\}$ denotes the unique map from $X$ to a one-point space. Hence, for 
	$F\in\Derb(\cor_X)$, one has $\rsect(X;F)\simeq\roim{a_X}F$.
	\item
	For $L\in\Derb(\cor)$,
	and for $Z$ locally closed in $X$, one denotes by $L_{XZ}$ the  constant sheaf $L_Z$ on $Z$ extended by $0$ on $X\setminus Z$. When $Z$ is closed, we shall often simply denote by $L_Z$ the sheaf $L_{XZ}$.
	\item
	For $x\in X$, denoting by $i_x\cl \{x\}\into X$ the embedding, and for $F\in\Derb(\cor_X)$, one denotes as usual by $F_x=\opb{i_x}F$ its stalk at $x$. One also sets $\rsect_xF=\epb{i_x}F$. (We sometimes identify $\opb{i_x}F$ and $\oim{i_x}\opb{i_x}F$ as well as $\epb{i_x}F$ and $\oim{i_x}\rsect_xF$.)
\item
We often denote by $K$ a locally constant sheaf on $X$ and by $M$ a locally constant sheaf on $Y$. 
As already mentioned, if $L\in\Derb(\cor)$, we denote by $L_X$ or $L_Y$ the associated constant sheaf on $X$ or $Y$. 
\item
Recall (see~\cite[Exe.~I.30]{KS90}) that $L\in\Derb(\cor)$ is {\em perfect} if it is isomorphic to a bounded complex of finitely generated projective $\cor$-modules. If $L$ is perfect, then so is $\RD(L)$ and the morphism $L\to\RD\RD(L)$ is an isomorphism.
	We shall denote  by $\Derb_f(\cor)$ the full triangulated category of $\Derb(\cor)$ consisting of perfect objects.
\end{itemize}

\subsubsection*{Ind-objects}
We shall make use of ind-objects. For a short  exposition see~\cite[\S1.11]{KS90}. For a more detailed
study, including new results that we shall use here, see~\cite[Ch.~6, \S8.6, Ch.~15]{KS06}. Let us recall a few facts that we need, skipping some delicate questions of universes. 

If $\shc$ is a category, one denotes by $\IC$ the category of ind-objects of $\shc$, a full subcategory of 
the category $\shc^\wedge$ of functors from $\shc^\rop$ to $\Set$. 
Recall from \cite[\S6.1]{KS06}: 
\begin{itemize}
	\item
	The natural functor $\shc\to\IC$ is fully faithful.
	\item
	The category $\IC$ admits small filtrant colimits, denoted $\sinddlim$. 
	\item
	Let $\shi$ be a small and filtrant category and  $\alpha\cl \shi\to\shc$ a functor.
	Let $T\cl\shc\to\shc'$ be a functor. Then 
	$T(\sinddlim\alpha)\simeq\sinddlim (T\circ\alpha)$. 
\end{itemize}

Now we assume that $\shc$ is abelian. Recall from {\cite[Th.~8.6.5]{KS06}  that:
	\begin{itemize}
		\item
		The category $\IC$ is abelian and the fully faithful  functor $\shc\to\IC$ is exact.
		\item
		Small filtrant colimits  are exact in $\IC$.
	\end{itemize}
	One should be aware that even if $\shc$ is a Grothendieck category, $\IC$ does not admit enough injectives in general.
	
	Let $\shi$ be a small category and $\alpha\cl \shi\to \shc$ be a functor. As already mentioned, one denotes by $\sinddlim\alpha$ its colimit in $\IC$.
	Note that if $\shc$ admits colimits, denoted $\sindlim$,   there is a natural morphism in $\IC$
	\eqn
	&&\sinddlim\alpha\to\sindlim\alpha
	\eneqn
	but this morphism is not an isomorphism in general. However:
	\begin{itemize}
		\item
		If $\sinddlim\alpha$ belongs to $\shc$, then $\sinddlim\alpha\isoto\sindlim\alpha$
		(see~\cite[Cor.~1.11.7]{KS90}). In this case, if $T\cl\shc\to\shc'$ is a functor, then
		$\sinddlim (T\circ\alpha)\isoto T(\sindlim\alpha)$. Therefore, $\sinddlim (T\circ\alpha)$ belongs to $\shc'$  and hence 
		is isomorphic to $\sindlim (T\circ\alpha)$.
	\end{itemize} 
	
	\section{Weakly cohomologically constructible sheaves}
	
	We consider here a slight generalisation of the notion of cohomologically constructible sheaves (see~\cite[Def.~3.4.1]{KS90}).
	
	\begin{definition}\label{def:wcc}
		Let $F\in\Derb(\cor_X)$. We say that $F$ is  {\em weakly cohomologically constructible} if for all $x\in X$, one has the isomorphisms
		\eqn
		&&\inddlim[x\in U]\rsect(U;F)\isoto F_x,\quad \rsect_xF\isoto\proolim[x\in U]\rsect_c(U;F).
		\eneqn
		We denote by  $\Derb_\wcc(\cor_X)$ the full subcategory of  $\Derb(\cor_X)$ consisting of 
		weakly cohomologically constructible objects.
	\end{definition}
	Recall that $F$ is cohomologically constructible if $F$ is weakly cohomologically constructible and moreover $F_x$ and $\rsect_xF$ are perfect objects of 
	$\Derb(\cor)$ for all $x\in X$. One denotes by $\Derb_\cc(\cor_X)$ the full subcategory of  $\Derb_\wcc(\cor_X)$ consisting of cohomologically constructible objects.

	\begin{remark}
		As explained in~\cite[Rem.~4.3.2]{KS90}, the isomorphisms in Definition~\ref{def:wcc} hold as soon as 
		the objects $\inddlim[x\in U]\rsect(U;F)$ and $\proolim[x\in U]\rsect_c(U;F)$ are representable.
	\end{remark}
	
	\begin{proposition}\label{pro:tri}
		The category $\Derb_\wcc(\cor_X)$ is triangulated. 
	\end{proposition}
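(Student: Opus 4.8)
To show that $\Derb_\wcc(\cor_X)$ is triangulated, I need to verify it is a full triangulated subcategory of $\Derb(\cor_X)$: it is closed under the shift functor $[1]$ and its inverse, and given a distinguished triangle $F' \to F \to F'' \xrightarrow{+1}$ in $\Derb(\cor_X)$ with two of the three terms in $\Derb_\wcc(\cor_X)$, the third also lies in $\Derb_\wcc(\cor_X)$. (It is also clearly closed under isomorphism, being defined by an isomorphism condition.) Stability under $[\pm1]$ is immediate since all the functors $\rsect(U;\scbul)$, $\rsect_c(U;\scbul)$, $(\scbul)_x$, $\rsect_x(\scbul)$ commute with shifts, and the colimit/limit defining the wcc condition are computed in the triangulated (hence shift-compatible) category $\Derb(\cor)$ — more precisely, the relevant $\inddlim$ and $\proolim$ are taken in $\IC(\Derb(\cor))$-type settings but the point is that both sides of the defining isomorphisms shift compatibly.

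The heart of the matter is the two-out-of-three property. Fix $x \in X$ and consider the two natural morphisms
\[
\alpha_F\cl \inddlim[x\in U]\rsect(U;F)\to F_x,\qquad \beta_F\cl \rsect_xF\to\proolim[x\in U]\rsect_c(U;F),
\]
which are functorial in $F$. The strategy is: a distinguished triangle $F'\to F\to F''\to F'[1]$ induces, after applying $\rsect(U;\scbul)$ for each neighborhood $U$ of $x$, a morphism of systems of distinguished triangles; passing to the filtrant colimit $\inddlim[x\in U]$, which is exact in the relevant ind-category and in particular sends distinguished triangles to distinguished triangles (filtrant colimits being exact), we get a distinguished triangle on the source side. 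On the target side, $(\scbul)_x = \opb{i_x}$ is an exact functor, so it sends the triangle to a distinguished triangle. Thus $\alpha_{F'}, \alpha_F, \alpha_{F''}$ fit into a morphism of distinguished triangles, and if two of them are isomorphisms, so is the third by the triangulated five-lemma (the long exact sequence argument). The same reasoning applies verbatim to $\beta$, using that $\epb{i_x} = \rsect_x$ is exact and that $\proolim[x\in U]\rsect_c(U;\scbul)$ — the "pro" analogue — behaves well with respect to triangles.

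The main obstacle I anticipate is making precise the claim that $\inddlim[x\in U]\rsect(U;F)$ (resp.\ $\proolim[x\in U]\rsect_c(U;F)$) is well-behaved with respect to distinguished triangles, since these colimits/limits are taken not in $\Derb(\cor)$ (where they need not exist) but in a larger category — the ind- (resp.\ pro-) category, or rather one must work at the level of cohomology, replacing $\inddlim$ by a system of $\inddlim H^n$. The clean way around this: apply $H^n(\scbul)$ everywhere. For each $n$, $H^n\rsect(U;F) = H^n(U;F)$, and the system $\{H^n(U;F)\}_{x\in U}$ is a filtrant system of $\cor$-modules whose colimit is $H^n(F_x) = H^n(F)_x$; a distinguished triangle gives a long exact sequence of such systems, filtrant colimits of $\cor$-modules are exact, and the five-lemma for modules finishes it. Symmetrically, for the compact-support side one uses $H^n_c(U;F)$ and the exactness of filtrant limits in the pro-category together with the long exact sequence. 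One subtlety to flag is that the wcc condition packages \emph{all} $n$ simultaneously via a single morphism in the derived (ind/pro) category; but since a morphism in $\Derb(\cor)$ is an isomorphism iff it induces isomorphisms on all $H^n$, checking the condition cohomology-group by cohomology-group suffices, and this is exactly where the five-lemma enters. I would also remark that the argument is essentially the one indicated in \cite{KS90}*{Prop.~3.4.3} for cohomologically constructible sheaves, adapted to drop the conditions involving $\RD_X$.
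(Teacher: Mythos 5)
Your architecture is essentially the paper's: shift-stability, the morphism of long exact sequences obtained by applying $H^j(U;\scbul)$ and $H^j(\scbul)_x$ to the triangle, exactness of filtrant colimits, and the five lemma. The gap is in the closing step, and it sits exactly at the subtlety you flag but then dismiss. The morphism to be proved an isomorphism, $\phi\cl \inddlim[x\in U]\rsect(U;F)\to F_x$, has as source an \emph{ind-object} of $\Derb(\cor)$; the content of the wcc condition (see the remark following Definition~\ref{def:wcc}) is precisely that this ind-object is representable. Two consequences. First, the five lemma must be run in the abelian category $\Ind(\md[\cor])$, where small filtrant colimits are exact by \cite{KS06}*{Th.~8.6.5}; running it ``for modules'' only identifies the naive colimits $\sindlim H^j(U;F)$ with $H^j(F)_x$, and a non-representable ind-object can have the expected naive colimit (the system $\Z\to\Z\to\cdots$ with transition maps multiplication by $2$ has naive colimit $\Z[1/2]$ but is not representable as an ind-object), so the naive statement does not yield wcc-ness. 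Second, your appeal to ``a morphism in $\Derb(\cor)$ is an isomorphism iff it induces isomorphisms on all $H^n$'' is not applicable: $\phi$ is not a morphism in $\Derb(\cor)$ until its source is known to be representable, which is the point at issue.

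What is needed is the ind-analogue of that conservativity statement: a morphism of uniformly bounded ind-objects of $\Derb(\cor)$ inducing isomorphisms on all $IH^j$ is an isomorphism. This is \cite{KS06}*{Lem.~15.4.6}, and it is precisely the lemma the paper invokes after establishing, via the five lemma in $\Ind(\md[\cor])$, that all $IH^j(\phi)$ are isomorphisms. (Relatedly, $\Ind(\Derb(\cor))$ is not triangulated, so your first formulation ``filtrant colimits send distinguished triangles to distinguished triangles in the ind-category'' cannot be taken literally; one must work with the $IH^j$ throughout, as you do in your second formulation.) The same remarks apply on the pro-side for $\rsect_x$.
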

	\begin{proof}
		(i) Remark first that for $F\in  \Derb_\wcc(\cor_X)$, $x\in X$  and $j\in\Z$, one has
		\eqn
		&&\inddlim[x\in U]H^j(U;F)\isoto H^j(F)_x.
		\eneqn
		(ii) Clearly, if $F\in  \Derb_\wcc(\cor_X)$, then so is the shifted sheaf $F[j]$ for $j\in\Z$.
		
		\spa
		(iii) Consider a distinguished triangle $F'\to F\to F''\to[+1]$ in $\Derb(\cor_X)$ and assume that 
		$F',F''\in \Derb_\wcc(\cor_X)$. Let $x\in X$ and let $U$ be an open neighborhood of $x$.
		We get the morphism of long exact sequences in the abelian category $\md[\cor]$
		\eq\label{eq:lesxU}
		&&\ba{l}\xymatrix{
			{\cdots}\ar[r]& H^j(U;F')\ar[r]\ar[d]& H^j(U;F)\ar[r]\ar[d]&H^j(U;F'')\ar[r]\ar[d]&H^{j+1}(U;F')\ar[r]\ar[d]& \cdots\\
			{\cdots}\ar[r]& H^j(F')_x\ar[r]        & H^j(F)_x\ar[r]        &H^j(F'')_x\ar[r]&H^{j+1}(F')_x\ar[r]       & \cdots.
		}\ea
		\eneq
		Applying the functor $ \inddlim[x\in U]$ and using~\cite[Th.~8.6.5]{KS06}, the first line gives rise to   the long exact sequence  
		in the abelian category $\Ind(\md[\cor])$:
		\eqn
		&&\cdots\to H^j(F')_x\to \inddlim[x\in U]H^j(U;F)\to H^j(F'')_x\to H^{j+1}(F')_x\to\cdots
		\eneqn
		We shall apply~\cite[Lemma~15.4.6]{KS06}, following  its notations, to the category $\shc=\md[\cor]$. 
		Consider the morphism
		\eqn
		&&\phi\cl \inddlim[x\in U]\rsect(U;F)\to F_x.
		\eneqn
		It follows from~\eqref{eq:lesxU} 
		that $IH^j(\phi)$ is an isomorphism for all $j\in\Z$ and therefore $\phi$ is an isomorphism 
		by loc.\ cit.
		
		\spa
		(iv) The proof for $ \rsect_xF$ is the same and we do not repeat it. 
	\end{proof}
	
	\begin{proposition}[see~\cite{KS90}*{Prop.~3.4.3}]\label{pro:dual}
		Let $F\in \Derb_\wcc(\cor_X)$. Then $\RD_XF\in \Derb_\wcc(\cor_X)$. Moreover, one has the isomorphisms 
		$\rsect_x\RD_XF\simeq \RD(F_x)$ and $(\RD_XF)_x\simeq \RD( \rsect_xF)$. 
	\end{proposition}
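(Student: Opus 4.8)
The plan is to derive everything from the two characterizing isomorphisms of wcc-sheaves (Definition~\ref{def:wcc}) combined with the standard adjunction/duality interplay for the operations $\rsect_x=\epb{i_x}$, $(\scbul)_x=\opb{i_x}$, $\rsect(U;\scbul)$ and $\rsect_c(U;\scbul)$. The key observation is that the \emph{two} conditions defining $\Derb_\wcc(\cor_X)$ are \emph{exchanged} by the duality functor $\RD_X$. First I would record the duality formulas on a point: for $i_x\cl\{x\}\into X$ one has $\RD(F_x)=\RD(\opb{i_x}F)\simeq\epb{i_x}\RD_XF=\rsect_x\RD_XF$ and dually $\RD(\rsect_xF)=\RD(\epb{i_x}F)\simeq\opb{i_x}\RD_XF=(\RD_XF)_x$, using $\RD\circ\opb{i_x}\simeq\epb{i_x}\circ\RD_X$ and $\RD\circ\epb{i_x}\simeq\opb{i_x}\circ\RD_X$ (these hold because $\RD(\scbul)=\rhom(\scbul,\omega)$, $\opb{i_x}\omega_X=\omega_X|_x$, and the usual adjunction $\rhom(\opb{f}(\scbul),\scbul')\simeq\rhom(\scbul,\roim{f}\scbul')$ together with $\epb{f}\rhom(\scbul,\scbul')\simeq\rhom(\opb{f}(\scbul),\epb{f}\scbul')$ on a point; note $\RD$ is an involution on a point, so no biduality hypothesis is needed here, only the existence of these canonical maps). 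This already gives the two claimed ``moreover'' isomorphisms \emph{once} we know $\RD_XF\in\Derb_\wcc(\cor_X)$ — indeed if $G:=\RD_XF$ is wcc, then $G_x$ is computed by $\inddlim[x\in U]\rsect(U;G)$, but that is not what we want; rather we want to identify $G_x$ with $\RD(\rsect_xF)$, which is exactly the displayed duality formula above, holding unconditionally. So the real content is the membership statement.

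To prove $\RD_XF\in\Derb_\wcc(\cor_X)$, I would verify the two defining isomorphisms for $G=\RD_XF$ by dualizing those for $F$. For a relatively compact open $U$ with $x\in U$, one has the Verdier-type duality $\rsect(U;\RD_XF)\simeq\RD(\rsect_c(U;F))$ (this is $\roim{a_U}\rhom(F|_U,\omega_U)\simeq\RD(\reim{a_U}(F|_U))$, a standard consequence of $\roim{f}\rhom(\scbul,\epb{f}\scbul')\simeq\rhom(\reim{f}\scbul,\scbul')$ applied to $f=a_U$), and symmetrically $\rsect_c(U;\RD_XF)\simeq\RD(\rsect(U;F))$ when $U$ is nice enough; alternatively, work with $\RD_X$ of the relevant pro-/ind-limits directly. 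Then, applying $\RD$ to the isomorphism $\rsect_xF\isoto\proolim[x\in U]\rsect_c(U;F)$ and using that $\RD$ turns a projective limit of complexes of $\cor$-modules into the corresponding inductive limit in $\Ind$ (here I would cite, or reprove via the argument of Proposition~\ref{pro:tri}, that $\RD$ commutes with the relevant $\proolim$ in the sense that $\RD(\proolim \rsect_c(U;F))\simeq\inddlim \RD(\rsect_c(U;F))\simeq\inddlim\rsect(U;\RD_XF)$), we obtain $(\RD_XF)_x\simeq\RD(\rsect_xF)\isoto\inddlim[x\in U]\rsect(U;\RD_XF)$, which is precisely the first wcc condition for $\RD_XF$. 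Dualizing the other wcc isomorphism for $F$ gives $\rsect_x\RD_XF\simeq\RD(F_x)\isoto\proolim[x\in U]\rsect_c(U;\RD_XF)$, the second wcc condition. Hence $\RD_XF$ is wcc and, as a by-product, the two ``moreover'' formulas are established.

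The main obstacle I anticipate is the compatibility of duality with the \emph{filtered/cofiltered (co)limits}: $\RD$ is a contravariant functor that exchanges $\inddlim$ in $\Ind(\md[\cor])$ with $\proolim$ in $\Pro(\md[\cor])$, and one must be careful that the ``$\proolim$'' appearing in Definition~\ref{def:wcc} is a genuine object of $\Derb(\cor)$ when $F$ is wcc (that is what being wcc \emph{means}), so that after dualizing one lands in $\Derb(\cor)$ and the inductive limit is representable there — this is where the conservativity/representability discussion of the Remark after Definition~\ref{def:wcc} and the machinery of \cite{KS06}*{Lem.~15.4.6} used in Proposition~\ref{pro:tri} re-enter. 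Concretely, I would run a cohomological long-exact-sequence and $IH^j$ argument, entirely parallel to the proof of Proposition~\ref{pro:tri}, to promote the stalkwise/sectionwise duality isomorphisms (which hold termwise in cohomology because $\RD$ over $\cor$ is exact up to the finite global dimension hypothesis and $\opb{i_x},\epb{i_x}$ have bounded cohomological amplitude on wcc sheaves) to an isomorphism of complexes, and thereby conclude both the membership and the two explicit duality formulas simultaneously.
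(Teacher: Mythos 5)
Your overall strategy --- verify the two defining isomorphisms for $\RD_XF$ by dualizing those for $F$ and invoking representability of the resulting ind-/pro-limits --- is the same as the paper's, but two of your intermediate claims are wrong as stated, and one of them leaves a genuine gap. First, you assert that \emph{both} ``moreover'' isomorphisms hold unconditionally, via a purported general identity $\RD\circ\epb{i_x}\simeq\opb{i_x}\circ\RD_X$. Only the first one, $\rsect_x\RD_XF\simeq\RD(F_x)$, is unconditional: it is the standard formula $\epb{f}\rhom(G,H)\simeq\rhom(\opb{f}G,\epb{f}H)$ applied with $H=\omega_X$. There is no general isomorphism (nor even a canonical morphism in the required direction) $\opb{i_x}\rhom(F,\omega_X)\simeq\rhom(\epb{i_x}F,\cor)$; the identity $(\RD_XF)_x\simeq\RD(\rsect_xF)$ is precisely the part of the statement that needs the wcc hypothesis, and it is obtained only as a by-product of the representability computation $\inddlim[x\in U]\rsect(U;\RD_XF)\simeq\RD\bigl(\proolim[x\in U]\rsect_c(U;F)\bigr)\simeq\RD(\rsect_xF)$. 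Your treatment of the first wcc condition is in substance this same computation, so this error is repairable by reversing your logical order: deduce the formula from the limit computation, not the other way around.

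Second, and more seriously, your verification of the second wcc condition for $\RD_XF$ rests on ``$\rsect_c(U;\RD_XF)\simeq\RD(\rsect(U;F))$ when $U$ is nice enough''. This is not a consequence of Verdier duality --- the adjunction $\roim{f}\rhom(\scbul,\epb{f}(\scbul))\simeq\rhom(\reim{f}(\scbul),\scbul)$ gives $\rsect(U;\RD_XF)\simeq\RD(\rsect_c(U;F))$, i.e.\ only the other direction --- and it fails in general for open $U$. The paper's device is to replace open neighborhoods by compact neighborhoods $K$ of $x$: one has $\rsect_K(X;\RD_XF)\simeq\RHom(F_K,\omega_X)\simeq\RD(\rsect(X;F_K))$ because $F_K$ has compact support, and then one interleaves the cofinal systems $\{K\}$ and $\{\dot K\}$ to identify $\proolim[x\in U]\rsect_c(U;\RD_XF)$ with $\RD\bigl(\inddlim[x\in U]\rsect(U;F)\bigr)\simeq\RD(F_x)$. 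Without this (or an equivalent) workaround your argument for the second condition does not close. Finally, the long-exact-sequence/$IH^j$ argument you propose at the end is unnecessary here: by the Remark following Definition~\ref{def:wcc}, representability of the two limits already yields the wcc isomorphisms.
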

	The case of $\Derb_\cc(\cor_X)$ is treated in~\cite{KS90}*{Prop.~3.4.3}. When reading carefully the proof, one 
	checks that the hypotheses that $F_x$ and $\rsect_xF$ are perfect are not used in the proof of the statement of Proposition~\ref{pro:dual}. Hence, the proof in loc.~cit.~indeed already proves this more general statement and we shall not repeat the argument here. 
		\begin{proposition}\label{pro:homtensL}
		Let $F\in\Derb_\wcc(\cor_X)$,  let $L\in\Derb(\cor)$ and let $x\in X$. Then $F\ltens L_X$ and 
		$\rhom(L_X,F)$ belong to  $\Derb_\wcc(\cor_X)$. Moreover, one has the isomorphisms
		\eqn
		&&\rhom(L_X,F)_x\isoto \RHom(L,F_x),\quad (\rsect_xF)\ltens L\isoto \rsect_x(F\tens L_X),
		\eneqn
		\end{proposition}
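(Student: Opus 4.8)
The plan is to treat the two functors $\rhom(L_X,\scbul)$ and $\scbul\tens L_X$ separately, in each case computing $\rsect(U;\scbul)$ for $U$ open, $\rsect_K(X;\scbul)$ for $K$ compact, $\rsect_c(U;\scbul)$ and $\rsect_x(\scbul)$ of the resulting sheaf in terms of the corresponding invariants of $F$, and then feeding these into the representability criterion of Definition~\ref{def:wcc} (via the Remark after it), using that $F$ is wcc and the behaviour of filtrant colimits of ind-objects recalled in the Preliminaries.

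I would first treat $\rhom(L_X,F)$. Restricting to an open $U$ gives $\rhom(L_X,F)|_U\simeq\rhom(\opb{a_U}L,F|_U)$, so the adjunction $\opb{a_U}\dashv\roim{a_U}$ yields $\rsect(U;\rhom(L_X,F))\simeq\RHom(L,\rsect(U;F))$. For $K$ compact, writing $\rsect_K(X;\scbul)\simeq\RHom(\cor_{XK},\scbul)$ and using $\cor_{XK}\tens L_X\simeq L_{XK}\simeq i_{K!}\opb{a_K}L$ together with $i_{K!}\dashv\epb{i_K}$ and $\opb{a_K}\dashv\roim{a_K}$ gives $\rsect_K(X;\rhom(L_X,F))\simeq\RHom(L,\rsect_K(X;F))$; and $\epb{i_x}\rhom(G,G')\simeq\rhom(\opb{i_x}G,\epb{i_x}G')$ (\cite{KS90}) gives $\rsect_x\rhom(L_X,F)\simeq\RHom(L,\rsect_xF)$. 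None of these uses constructibility. Now, $F$ being wcc, $\inddlim[x\in U]\rsect(U;F)\isoto F_x$ is representable, so the colimit property recalled in the Preliminaries, applied to $T=\RHom(L,\scbul)$, shows that $\inddlim[x\in U]\rsect(U;\rhom(L_X,F))$ is representable and $\simeq\RHom(L,F_x)$; dually, passing from $\{\rsect_c(U;\scbul)\}_{x\in U}$ to $\{\rsect_K(X;\scbul)\}_K$ over compact neighbourhoods $K$ of $x$ (as in the proof of Proposition~\ref{pro:dual}) and using the pro-analogue with $T=\RHom(L,\scbul)$ and $\proolim[x\in U]\rsect_c(U;F)\isoto\rsect_xF$, one gets $\proolim[x\in U]\rsect_c(U;\rhom(L_X,F))$ representable and $\simeq\RHom(L,\rsect_xF)$. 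By the Remark after Definition~\ref{def:wcc}, $\rhom(L_X,F)\in\Derb_\wcc(\cor_X)$, and the stalk computation is the first asserted isomorphism.

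For $F\tens L_X$ the co-stalk goes the same way, now via the unconditional projection formula $\reim{a_U}(F|_U\tens\opb{a_U}L)\simeq\reim{a_U}(F|_U)\tens L$: it gives $\rsect_c(U;F\tens L_X)\simeq\rsect_c(U;F)\tens L$, and applying $\proolim[x\in U]$ and the pro-colimit property to $T=\scbul\tens L$ (with $\proolim[x\in U]\rsect_c(U;F)\isoto\rsect_xF$) shows $\proolim[x\in U]\rsect_c(U;F\tens L_X)$ is representable and $\simeq\rsect_xF\tens L$ — this is the co-stalk half of wcc-ness for $F\tens L_X$, and the second asserted isomorphism. The hard part will be the stalk half, namely representability of $\inddlim[x\in U]\rsect(U;F\tens L_X)$: here there is no projection formula for $\roim{a_U}$, and this is the main obstacle. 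I would circumvent it by dévissage on $L$: since $\cor$ has finite global dimension, $L$ is isomorphic in $\Derb(\cor)$ to a bounded complex of free $\cor$-modules, and since $\scbul\tens L_X$ is a triangulated functor of $L$ and $\Derb_\wcc(\cor_X)$ is triangulated by Proposition~\ref{pro:tri}, it suffices to treat $L=\cor^{(I)}$ in degree $0$, where $F\tens L_X\simeq\bigoplus_{i\in I}F$. Since $X$ is good, $\roim{a_U}$ commutes with small filtrant colimits, hence $\rsect(U;\bigoplus_IF)\simeq\bigoplus_I\rsect(U;F)$ for every open $U$, and applying the colimit property to the functor $M\mapsto\bigoplus_{i\in I}M$ shows that $\inddlim[x\in U]\rsect(U;\bigoplus_IF)$ is representable, $\simeq\bigoplus_IF_x$. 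Thus $F\tens L_X\in\Derb_\wcc(\cor_X)$; since $(F\tens L_X)_x\simeq F_x\tens L$ is automatic, this and the co-stalk computation complete the proof.

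Two points of execution. The ind/pro arguments above are really statements in derived categories; one carries them out degreewise over $\md[\cor]$ using $\Ind(\md[\cor])$, exactly as in the proof of Proposition~\ref{pro:tri} (and with the results of \cite{KS06}). And the reason the argument works is that, for wcc $F$, the objects $\inddlim[x\in U]\rsect(U;F)$ and $\proolim[x\in U]\rsect_c(U;F)$ are \emph{genuinely representable}: the functors $\RHom(L,\scbul)$ and $\scbul\tens L$ need not commute with arbitrary filtrant colimits, and it is representability that licenses interchanging them with $\inddlim$ (resp.\ $\proolim$).
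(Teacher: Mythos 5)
Your computation of the two displayed isomorphisms, and of the representability of $\sinddlim[x\in U]\rsect(U;\rhom(L_X,F))$ and of $\sproolim[x\in U]\rsect_c(U;F\ltens L_X)$, is exactly the paper's argument: one writes $\rsect(U;\rhom(L_X,F))\simeq\RHom(L,\rsect(U;F))$ and $\rsect_c(U;F\ltens L_X)\simeq\rsect_c(U;F)\ltens L$, and then commutes the functor $\RHom(L,\scbul)$ (resp.\ $\scbul\ltens L$) past the ind-limit (resp.\ pro-limit), which is licensed precisely by the representability coming from the wcc hypothesis on $F$. Note that this is in fact \emph{all} the paper's own proof establishes: it does not verify the costalk condition for $\rhom(L_X,F)$ nor the stalk condition for $F\ltens L_X$. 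Your additional verification of the costalk condition for $\rhom(L_X,F)$ --- passing to the cofinal pro-system $\rsect_K(X;\scbul)$ over compact neighbourhoods as in Proposition~\ref{pro:dual} and using $\rsect_K(X;\rhom(L_X,F))\simeq\RHom(L,\rsect_K(X;F))$ --- is correct and is a genuine supplement to the printed proof.

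There is, however, a gap in your treatment of the stalk condition for $F\ltens L_X$. The d\'evissage to $L=\cor^{(I)}$ in degree $0$ is fine, but the assertion that $\rsect(U;\bigoplus_IF)\simeq\bigoplus_I\rsect(U;F)$ for \emph{every} open $U$ of a good space is false: $\rsect(U;\scbul)$ does not commute with infinite direct sums in general. For instance, with $U=\bigsqcup_{n\geq 1}(n,n+1)\subset\R$ and $F=\cor_U$, one has $\rsect(U;\bigoplus_IF)\simeq\prod_{n}\cor^{(I)}$, which strictly contains $\bigoplus_I\prod_{n}\cor\simeq\bigoplus_I\rsect(U;F)$ once $I$ is infinite. (Commutation does hold for $\rsect_c(U;\scbul)$ and for sections over compacts, which is why the dual halves go through.) Shrinking to connected neighbourhoods does not repair this, since $F$ is not assumed locally constant, so representability of $\sinddlim[x\in U]\rsect(U;F\ltens L_X)$ is not obtained by this route. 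To be fair, the paper itself leaves exactly this point (and the costalk condition for $\rhom(L_X,F)$) unproved, and only the two halves it actually establishes are used in the sequel; but as written your argument for full membership of $F\ltens L$ in $\Derb_\wcc(\cor_X)$ does not close.
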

	\begin{proof}
		(i) One has 
		\eqn
		\RHom(L,F_x)&\simeq&\RHom(L,\sinddlim[x\in U]\rsect(U;F))\simeq\sinddlim[x\in U]\RHom(L, \rsect(U;F))\\
		&\simeq& \sinddlim[x\in U]\rsect(U;\rhom(L_X,F)).
		\eneqn
		The last isomorphism follows from $\RHom(L, \rsect(U;F))\simeq \rsect(U;\rhom(L_X,F))$, which is true for any $F\in \Derb(\cor_X)$ (as recalled later in \eqref{eq:secthom}).
		
		This proves that $\sinddlim[x\in U]\rsect(U;\rhom(L_X,F))$ is representable as well as the first isomorphism.
		
		\spa
		(ii) One has 
		\eqn
		(\rsect_xF)\ltens L&\simeq&(\proolim[x\in U]\rsect_c(U;F))\ltens L\simeq \proolim[x\in U](\rsect_c(U;F)\ltens L)\\
		&\simeq&\proolim[x\in U]\rsect_c(U;F\ltens L_X).
		\eneqn
		 The last isomorphism follows from $\rsect_c(U;F)\ltens L\simeq \rsect_c(U;F\ltens L_X)$, which is true for any $F\in \Derb(\cor_X)$ (as recalled later in \eqref{eq:secthom}).
		
		This proves that $\sproolim[x\in U]\rsect_c(U;F\tens L_X)$ is representable as well as the second isomorphism.
	\end{proof}
	
		\section{Weakly $\R$-constructible sheaves}
	
	The property of being  weakly cohomologically constructible is not stable by the six operations. That is why we shall consider weakly $\R$-constructible sheaves instead.
	Hence, from now on, all manifolds and morphisms of manifolds will be real analytic.

	Let $X$ be  a real analytic manifold. As already mentioned,  we denote by $\Derb_\wRc(\cor_X)$  (resp.\ $\Derb_\Rc(\cor_X)$) the full triangulated subcategory of $\Derb(\cor_X)$ consisting of weakly  $\R$-constructible (resp.\ $\R$-constructible) complexes on $X$. 
	
	Recall that $F\in\Derb(\cor_X)$ belongs to $\Derb_\wRc(\cor_X)$ if and only if its micro-support $\SSi(F)$ is 
	contained in a conic subanalytic isotropic subset of $T^*X$ and this is equivalent to the fact that $\SSi(F)$ is a conic subanalytic Lagrangian subset.
	
	If $F_1,F_2\in \Derb_\wRc(\cor_X)$, then $F_1\ltens F_2$ and $\rhom(F_1,F_2)$ belong to $ \Derb_\wRc(\cor_X)$. Moreover, 
	if $f\cl X\to Y$ is a morphism of real analytic manifolds and $G\in  \Derb_\wRc(\cor_Y)$, then $\opb{f}G$ and $\epb{f}G$  belong to $ \Derb_\wRc(\cor_X)$. If $F\in  \Derb_\wRc(\cor_X)$ and $f$ is proper on $\supp(F)$, then $\reim{f}F\isoto\roim{f}F$ 
	belongs to $ \Derb_\wRc(\cor_Y)$. This follows from~\cite[Prop.~8.4.6]{KS90}.

	Finally, recall (see \cite[\S8.4]{KS90}) that $F\in\Derb_\wRc(\cor_X)$ is $\R$-constructible if for any $x\in X$, $F_x$ is perfect.
	
	The following lemma about the relation between weak $\R$-constructibility and weak cohomological constructibility is well-known. Namely, it follows from~\cite{KS90}*{Lemma~8.4.7} as in the proof of Prop.~8.4.9 in loc.~cit.
	
	\begin{lemma}
		Weakly $\R$-constructible sheaves are weakly cohomologically constructible. In other words, 
		the category 
		$\Derb_\wRc(\cor_X)$ is a full triangulated subcategory of $\Derb_\wcc(\cor_X)$.
	\end{lemma}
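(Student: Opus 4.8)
The plan is to argue locally and reduce to a representability statement, then bring in the subanalytic geometry through the non-characteristic deformation lemma. The two conditions of Definition~\ref{def:wcc} being local on $X$, I fix $x\in X$; by the remark following that definition it suffices to show that the ind-object $\inddlim[x\in U]\rsect(U;F)$ and the pro-object $\proolim[x\in U]\rsect_c(U;F)$ are representable, for they are then identified with $F_x$ and $\rsect_xF$ via the canonical morphisms. I would do this by exhibiting a \emph{cofinal} system $\{U_n\}_{n\geq1}$ of relatively compact open subanalytic neighborhoods of $x$, with $\bigcap_nU_n=\{x\}$, along which the restriction maps $\rsect(U_n;F)\to\rsect(U_{n+1};F)$ and the extension-by-zero maps $\rsect_c(U_{n+1};F)\to\rsect_c(U_n;F)$ are all \emph{isomorphisms}: then both pro-/ind-objects are essentially constant, hence representable.

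Producing such a system is where weak $\R$-constructibility is used, and I expect it to be the only real obstacle; everything else is formal. Working in a local chart centred at $x$, and using that $\SSi(F)$ is contained in a closed conic subanalytic isotropic subset of $T^*X$, one chooses a nonnegative proper subanalytic function $\varphi$ near $x$ with $\varphi^{-1}(0)=\{x\}$ such that neither $d\varphi$ nor $-d\varphi$ meets $\SSi(F)$ over $\{0<\varphi<\varepsilon\}$ for some $\varepsilon>0$. Such a $\varphi$ exists: take a subanalytic triangulation of $X$ adapted to $F$ (so that $F$ is locally constant on each open simplex), or equivalently a $\mu$-stratification adapted to $F$ (see~\cite{KS90}*{\S~8.3--8.4}), and let $\varphi$ vanish exactly at $x$ with nonvanishing differential along every stratum over $\{0<\varphi<\varepsilon\}$. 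Then $U_n=\{\varphi<1/n\}$ is the desired cofinal system (alternatively one may use the open stars of $x$ in the barycentric subdivisions of an adapted triangulation).

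Finally, that the transition maps along $\{U_n\}$ are isomorphisms follows from the non-characteristic deformation lemma. For $0<t'<t<\varepsilon$, applying \cite{KS90}*{Prop.~2.7.2} (in its form involving $\SSi(F)$) to the increasing family $(\{\varphi<s\})_{t'\leq s\leq t}$---legitimate since $d\varphi\notin\SSi(F)$ on $\{0<\varphi<\varepsilon\}$---gives $\rsect(\{\varphi<t\};F)\isoto\rsect(\{\varphi<t'\};F)$, and letting $t'\to0^+$ gives $\rsect(\{\varphi<t\};F)\isoto F_x$; hence $\inddlim[x\in U]\rsect(U;F)$ is representable. The second isomorphism of Definition~\ref{def:wcc} is entirely analogous, using the compactly supported form of the deformation lemma together with $-d\varphi\notin\SSi(F)$ (or, alternatively, applying the first part to $\RD_XF$---again weakly $\R$-constructible, with antipodal micro-support---and dualising via $\rsect(U;\RD_XF)\simeq\RD(\rsect_c(U;F))$, as recalled in the proof of Proposition~\ref{pro:dual}). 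This shows $F\in\Derb_\wcc(\cor_X)$, hence $\Derb_\wRc(\cor_X)\subseteq\Derb_\wcc(\cor_X)$; as $\Derb_\wRc(\cor_X)$ is already triangulated and the inclusion is full, we obtain a full triangulated subcategory. The argument is the ``weak'' counterpart of the classical fact that $\R$-constructible sheaves are cohomologically constructible, the difference being that perfectness of stalks is never invoked---which is precisely why only the two conditions of Definition~\ref{def:wcc} survive, rather than the stronger notion of \cite{KS90}*{Def.~3.4.1}.
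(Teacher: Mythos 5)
Your argument is essentially the paper's: both exhibit a fundamental system of subanalytic neighborhoods of $x$ along which all transition maps on $\rsect(\scbul;F)$ and $\rsect_c(\scbul;F)$ are isomorphisms, using the isotropy of $\SSi(F)$ to make a function $\varphi$ non-characteristic on a punctured neighborhood of $x$ and then invoking the non-characteristic deformation/microlocal Morse lemma (the paper simply takes $\varphi(y)=|y-x|$ and obtains the non-characteristic condition from the microlocal Bertini--Sard theorem, which is cleaner than building $\varphi$ from an adapted triangulation). One caveat: your parenthetical alternative for the $\rsect_c$ half --- applying the first half to $\RD_XF$ and dualising via $\rsect(U;\RD_XF)\simeq\RD(\rsect_c(U;F))$ --- would need Hypothesis~\eqref{eq:dualconserv} to conclude that the original transition maps are isomorphisms from the fact that their duals are, and the lemma does not assume it; your primary route via $-\mathrm{d}\varphi$ is the right one and is what the paper does.
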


	The following proposition will be an important ingredient in many of the isomorphisms we prove below. We give a direct proof for weakly $\mathbb{R}$-constructible sheaves, but we also refer to \cite{Su03}*{Cor.~2.2.6} for more general results in this direction.
	
	\begin{proposition}\label{pro:dualconserv}
		 The functor $\prod_{x\in X}\rsect_x(\scbul)\cl\Derb_{\wRc}(\cor_X)\to\Derb(\cor)$ is conservative. 
	\end{proposition}
	\begin{proof}
Let $F\in \Derb_{\wRc}(\cor_X)$. We need to show that $\rsect_{x}F\simeq 0$ for all $x\in X$ implies $F\simeq 0$.

\spa
	(i) First, assume that $F$ is locally constant. Since the problem is local, we may assume that $F$ is constant, that is, $F=L_X$ for some $L\in\Derb(\cor)$.
 Let $d\geq 1$ be the dimension of $X$. (The statement is clear for $\dim X=0$.) Consider an open ball (in a local chart) $B_{x_0}(\epsilon)$ centred at $x_0$ with radius $\epsilon>0$. 
 Then we consider the distinguished triangle
\eqn
&&\rsect_{x_0}F\To \rsect_{B_{x_0}(\epsilon)}F\To \rsect_{B_{x_0}(\epsilon)\setminus\{x_0\}}F\overset{+1}{\To}.
\eneqn
	After taking global sections, the second and third objects in this triangle become $\rsect(B_{x_0}(\epsilon);L_X)\simeq L$ and $\rsect(B_{x_0}(\epsilon)\setminus \{x_0\};L_X)\simeq L\oplus L[-(d-1)]$, respectively. (Note that $B_{x_0}(\epsilon)$ is contractible and $B_{x_0}(\epsilon)\setminus \{x_0\}$ is homotopy equivalent to a sphere $S^{d-1}$, the boundary of  $B_{x_0}(\epsilon)$.) Therefore, the hypothesis $\rsect_{x_0}L_X\simeq0$ implies $L\simeq L\oplus L[-(d-1)]$, hence $L\simeq0$.
	
\spa
(ii) In the general case, consider a subanalytic stratification $X=\bigsqcup_\alpha X_\alpha$ such that $F$ is locally constant on the strata. Let $d$ be the dimension of $X$ and let us argue by induction on $d$, assuming the result is proved for manifolds of dimension $d-1$. By (i), $F\simeq0$ on the open strata. Let $Z$ be a stratum of 
maximal dimension on which $F\vert_Z$ is (possibly) not zero. 
Denote by $j_Z\cl Z\into X$ the embedding  and let   $x\in Z$. Since $F$ is supported by $Z$ (in a neighborhood of $Z$), $F\simeq\oim{j_Z}\opb{j_Z}F$. 
	Then 
	\eqn
	\rsect_xF&\simeq&\rhom(\cor_{Xx},F)\simeq \rhom(\cor_{Xx},\oim{j_Z}\opb{j_Z}F)\\
	&\simeq&\oim{j_Z}\rhom(\opb{j_Z}\cor_{Xx},\opb{j_Z}F)\simeq \oim{j_Z}\rsect_x(F\vert_Z).
	\eneqn
	If $\rsect_xF\simeq0$, we get $\rsect_x(F\vert_Z)\simeq0$, hence $F\simeq 0$ by the induction hypothesis. 
	\end{proof}

	\subsection*{Inverse images}
	
	\begin{theorem}\label{th:invim}
		Let $f\cl X\to Y$ be a morphism of real analytic manifolds. Let $M,G\in \Derb_\wRc(\cor_Y)$ and assume that $M$ is locally constant. Then 
		\banum
		\item
		$\opb{f}\rhom(M,G)\isoto \rhom(\opb{f}M,\opb{f}G)$,
		\item
		$\epb{f}G\ltens \opb{f}M\isoto \epb{f}(G\ltens M)$.
		\eanum
	\end{theorem}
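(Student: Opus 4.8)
The plan is to reduce everything to statements about stalks and costalks, using Proposition~\ref{pro:dualconserv}: since all the sheaves involved are weakly $\R$-constructible, hence weakly cohomologically constructible, it suffices (for part (b), under~\eqref{eq:dualconserv}) to check that the morphism induces an isomorphism after applying $\rsect_x(\scbul)$ for every $x\in X$. For part (a), which does not need~\eqref{eq:dualconserv}, one can instead check the isomorphism on stalks $F_x$, since a morphism in $\Derb(\cor_X)$ is an isomorphism if and only if it induces an isomorphism on all stalks. So the first step is to construct the two morphisms (these are the standard adjunction/projection morphisms, and one should note that $\opb{f}L_Y$ and $\opb{f}G$ are again weakly $\R$-constructible, with $\opb{f}L_Y$ locally constant, so both sides of each isomorphism lie in $\Derb_\wcc(\cor_X)$ by Propositions~\ref{pro:homtensL} and the stability properties recalled above — which is what licenses the stalk/costalk criterion).

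For (a): fix $x\in X$, set $y=f(x)$. Since $L_Y$ is locally constant, on a small enough neighborhood of $y$ it is isomorphic to the constant sheaf $L_Y|_V\simeq (L_y)_V$ where $L_y=(L_Y)_y\in\Derb(\cor)$; pulling back, $\opb{f}L_Y$ is the constant sheaf with stalk $L_y$ near $x$. Now compute the stalk of the left-hand side: $\bigl(\opb{f}\rhom(L_Y,G)\bigr)_x\simeq \rhom(L_Y,G)_y$, and by Proposition~\ref{pro:homtensL} applied to $G\in\Derb_\wcc(\cor_Y)$ and $L=L_y$, this is $\RHom(L_y,G_y)$. For the right-hand side, $\rhom(\opb{f}L_Y,\opb{f}G)_x\simeq\rhom((L_y)_X,\opb{f}G)_x$ near $x$, and since $\opb{f}G\in\Derb_\wcc(\cor_X)$, Proposition~\ref{pro:homtensL} again gives $\RHom(L_y,(\opb{f}G)_x)\simeq\RHom(L_y,G_y)$. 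One then checks the canonical morphism realizes this identification (it does, by naturality of all the isomorphisms in Proposition~\ref{pro:homtensL}).

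For (b): fix $x\in X$, $y=f(x)$, and apply $\rsect_x(\scbul)=\epb{i_x}$. Using $\epb{i_x}\epb{f}\simeq\epb{(f\circ i_x)}\simeq\epb{i_y}\circ(\text{point functors})$, more precisely $\rsect_x\epb{f}G\simeq\rsect_y G$ (costalk of $\epb{f}G$ at $x$ equals costalk of $G$ at $y$, since $f\circ i_x=i_y$), the left-hand side becomes $\rsect_x(\epb{f}G\ltens\opb{f}L_Y)$. Near $x$, $\opb{f}L_Y$ is constant with stalk $L_y$, so by Proposition~\ref{pro:homtensL} (second isomorphism, with $F=\epb{f}G\in\Derb_\wcc(\cor_X)$ and $L=L_y$) this is $(\rsect_x\epb{f}G)\ltens L_y\simeq(\rsect_y G)\ltens L_y$. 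On the right-hand side, $\rsect_x\epb{f}(G\ltens L_Y)\simeq\rsect_y(G\ltens L_Y)$; but $G\ltens L_Y\in\Derb_\wcc(\cor_Y)$ and near $y$, $L_Y$ is constant with stalk $L_y$, so again by Proposition~\ref{pro:homtensL} this equals $(\rsect_y G)\ltens L_y$. So both costalks agree, and by Proposition~\ref{pro:dualconserv} the morphism is an isomorphism.

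The main obstacle is purely bookkeeping rather than conceptual: one must make sure that the isomorphisms produced abstractly (from conservativity on stalks/costalks, or from the ``representable $\Rightarrow$ iso'' mechanism in Proposition~\ref{pro:homtensL}) are actually induced by the \emph{canonical} morphisms appearing in the statement, and not merely some abstract isomorphism. This requires checking commutativity of a few diagrams expressing compatibility of $\opb{f}$, $\epb{f}$, $\ltens$ and $\rhom$ with the colimit/limit presentations $F_x\simeq\inddlim_{x\in U}\rsect(U;F)$ and $\rsect_xF\simeq\proolim_{x\in U}\rsect_c(U;F)$; each individual compatibility is standard, but assembling them carefully is where the real work lies. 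A secondary point to be careful about: $\opb{f}L_Y$ is only \emph{locally} constant, so one works in a neighborhood of $x$ where it trivializes, which is harmless since stalks and costalks at $x$ only depend on such a neighborhood.
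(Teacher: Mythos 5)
Your proposal is correct and follows essentially the same route as the paper: reduce to the case where $L_Y$ is constant, verify (a) on stalks and (b) on costalks using Proposition~\ref{pro:homtensL} together with the identity $\rsect_x\epb{f}H\simeq\rsect_{f(x)}H$, and conclude (b) by the conservativity statement of Proposition~\ref{pro:dualconserv}. The bookkeeping caveat you raise about canonicity of the isomorphisms is legitimate but is treated with the same level of detail in the paper itself.
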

	\begin{proof}
		Since the problem is local on $Y$, we may assume that $M=L_Y$ is the constant sheaf associated with some $L\in\Derb(\cor)$. Hence $\opb{f}L_Y\simeq L_X$. 
		
		\spa
		(a) Let $x\in X$, and set $y=f(x)$. Applying Proposition~\ref{pro:homtensL}, one gets
		\eqn
		\rhom(L_X,\opb{f}G)_x&\simeq&\rhom(L,(\opb{f}G)_x)\\
		&\simeq&\rhom(L,G_y)\\ &\simeq &\rhom(L_Y,G)_y\simeq (\opb{f}\rhom(L_Y,G))_x.
		\eneqn
		(b) Remark first that for any sheaf $H$ on $Y$, one has $\rsect_x(\epb{f}H)\simeq  \rsect_yH$. 
		Then using Proposition~\ref{pro:homtensL}, one has 
		\eqn
		\rsect_x( \epb{f}G\ltens L_X)&\simeq&(\rsect_x\epb{f}G)\ltens L\simeq (\rsect_yG)\ltens L,\\
		\rsect_x\epb{f}(G\ltens L_Y)&\simeq&\rsect_y(G\ltens L_Y)\simeq (\rsect_yG)\ltens L.
		\eneqn
		Set $A=\epb{f}G\ltens L_X$ and $B=\epb{f}(G\ltens L_Y)$. We have proved that the morphism $A\to B$ induces 
		for  all $x\in X$
		an isomorphism $\rsect_xA\simeq \rsect_xB$. Then $A\simeq B$ by Proposition~\ref{pro:dualconserv}.
	\end{proof}
	
	\subsection*{Tensor product and hom}
	We shall make use of the following well-known result. We nonetheless give a proof for the reader's convenience.
	
	\begin{lemma}\label{lem:noether}
		Let $L,M\in\Derb(\cor)$ and let $N\in\Derb_f(\cor)$. Then 
		\eqn
		&&\RHom(L,M)\ltens N\isoto \RHom(L,M\ltens N).
		\eneqn
	\end{lemma}
	\begin{proof}
		By hypothesis, we may represent $N$ by a bounded complex of projective modules of finite rank. 
		
		\spa
		(i) Assume first that $N=P$ is concentrated in a single degree. 
		If $P$ is  of finite rank, there exists an integer $n$ and an epimorphism $\cor^n\epito P$. If moreover $P$ is projective, then this epimorphism has a retract and we get $\cor^n\simeq P\oplus Q$. This proves the result in this case.
		
		\spa
		(ii) Now assume that $N$ is represented by the complex $0\to P^0\to\cdots\to P^m\to0$ with all $P^j$'s projective of finite rank. 
		Here we assume for simplicity in the notations that $P^0$ is in degree $0$. 
		Assume that the result is proved for complexes of amplitude $\leq m$.
		Let us use the so-called ``stupid truncation''. Denote by $N_0$ the complex $0\to P^0\to\cdots\to P^{m-1}\to0$ and by $u\cl N\to N_0$ the natural morphism. 
		We have an exact sequence of complexes $0\to P^m[-m]\to N\to[u] N_0\to 0$ and it follows that the triangle 
		$P^m[-m]\to N\to[u] N_0\to[+1]$ is distinguished. Arguing by induction on $m$, the proof is complete.
	\end{proof}

	Let $X$ and $Y$ be real analytic manifolds. As usual, one denotes by $q_1$ and $q_2$ the projections from $X\times Y$ to $X$ and $Y$, respectively. 
	One denotes by $\delta\cl X\to X\times X$ the diagonal morphism. One denotes  by $\letens$ the external tensor product
	\eqn
	&&F\letens G\eqdot \opb{q_1}F\ltens\opb{q_2}G.
	\eneqn
	Recall~\cite[Prop.~3.4.4]{KS90} that for $F\in\Derb_\Rc(\cor_X)$ and $G\in\Derb(\cor_Y)$, one has the isomorphism
	\eq\label{eq:Dhom}
	&&\RD_XF\letens G\isoto \rhom(\opb{q_1}F,\epb{q_2}G).
	\eneq
	Also note that we have the following isomorphism for $F_1,F_2\in\Derb(\cor_X)$ and $G_1,G_2\in\Derb(\cor_Y)$:
	\eq\label{eq:etenstens}
	&&(F_1\letens F_2)\ltens(G_1\letens G_2)\simeq (F_1\ltens G_1)\letens(F_2\ltens G_2).
	\eneq
	
	\begin{theorem}\label{th:tenshom}
		Let $K,F_1\in\Derb_\wRc(\cor_X)$, with $K$ locally constant and let $F_2\in\Derb_\Rc(\cor_X)$. 
		Then 
		\banum
		\item 
		$\rhom(K,F_1)\ltens F_2\isoto \rhom(K,F_1\ltens F_2)$.
		\item
		$\rhom(F_2,F_1)\ltens K\isoto \rhom(F_2,F_1\ltens K)$.
		\eanum
	\end{theorem}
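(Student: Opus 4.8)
The plan is to reduce both statements to the local, stalk-level computations already available, exactly in the spirit of the proof of Theorem~\ref{th:invim}. For part (a), the question is local on $X$, so one may assume $L_X$ is the constant sheaf associated with some $L\in\Derb(\cor)$; since $L_X$ is weakly $\R$-constructible and locally constant on a manifold, shrinking $X$ we may even take $L$ to be an object of $\Derb(\cor)$ with no further hypothesis. Now $F_1\in\Derb_\wRc(\cor_X)$, hence $F_1\in\Derb_\wcc(\cor_X)$ by the Lemma, and $F_1\ltens F_2\in\Derb_\wRc(\cor_X)\subset\Derb_\wcc(\cor_X)$ as well. The natural morphism $\rhom(L_X,F_1)\ltens F_2\to\rhom(L_X,F_1\ltens F_2)$ has source and target in $\Derb_\wRc(\cor_X)$, so by Proposition~\ref{pro:dualconserv} it suffices to check that it induces an isomorphism on $\rsect_x$ for every $x\in X$. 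Applying $\rsect_x$ to the right-hand side and using Proposition~\ref{pro:homtensL} together with the compatibility of $\rsect_x=\epb{i_x}$ with $\ltens$ on the constant-sheaf factor, one gets $\rsect_x\rhom(L_X,F_1\ltens F_2)$. I would then compute the left-hand side: here the key point is that $\rhom(L_X,F_1)_x\simeq\RHom(L,F_{1,x})$ by Proposition~\ref{pro:homtensL}, and more generally one wants a stalk/sections formula for $\rsect_x$ of $\rhom(L_X,F_1)\ltens F_2$. The cleanest route is to observe that $\rhom(L_X,F_1)$ is itself wcc (Proposition~\ref{pro:homtensL}) and that $\rsect_x(\rhom(L_X,F_1)\ltens F_2)$ unwinds, via $F_2\in\Derb_\Rc$, to $\RHom(L,\rsect_x(F_1\ltens F_2))$ — matching the right-hand side.

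Here the local perfectness of $F_2$ is essential: because $F_2$ is $\R$-constructible, each stalk $F_{2,x}$ is perfect, i.e.\ an object of $\Derb_f(\cor)$, and this is precisely the hypothesis under which Lemma~\ref{lem:noether} gives $\RHom(L,M)\ltens F_{2,x}\isoto\RHom(L,M\ltens F_{2,x})$ for any $L,M\in\Derb(\cor)$. So the strategy for (a) is: reduce to constant $L_X$; pass to $\rsect_x$; on the right use Proposition~\ref{pro:homtensL} to rewrite as $\RHom(L,\rsect_x(F_1\ltens F_2))$ and, peeling off the constant factor $F_2$ near $x$ (its stalk being perfect), as $\RHom(L,(\rsect_xF_1)\ltens F_{2,x})$; on the left similarly arrive at $\RHom(L,\rsect_xF_1)\ltens F_{2,x}$; then invoke Lemma~\ref{lem:noether} with $N=F_{2,x}\in\Derb_f(\cor)$ to identify the two; finally conclude globally by Proposition~\ref{pro:dualconserv}.

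Part (b) is dual and should be handled by applying duality, using Proposition~\ref{pro:dual} and the identity~\eqref{eq:Dhom}. Concretely, $\rhom(F_2,F_1)\simeq\rhom(F_2,\omega_X)\ltens\omega_X^{\otimes-1}$ is awkward directly, so instead I would rewrite $\rhom(F_2,-)$ via $F_2\in\Derb_\Rc$: one has $\rhom(F_2,F_1\ltens L_X)\simeq\RD_XF_2\ltens F_1\ltens L_X$ when $F_2$ is $\R$-constructible? — this is not literally correct in general, so the honest move is to use~\eqref{eq:Dhom} on $X\times X$, pull back along the diagonal $\delta$, and combine with~\eqref{eq:etenstens}. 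Explicitly, write $\rhom(F_2,H)\simeq\opb{\delta}\rhom(\opb{q_1}F_2,\epb{q_2}H)\simeq\opb{\delta}(\RD_XF_2\letens H)$ for $H\in\Derb(\cor_X)$, valid since $F_2\in\Derb_\Rc$; then $\rhom(F_2,F_1\ltens L_X)\simeq\opb{\delta}(\RD_XF_2\letens(F_1\ltens L_X))$, and using~\eqref{eq:etenstens} this is $\opb{\delta}\big((\RD_XF_2\letens F_1)\ltens(\cor_X\letens L_X)\big)\simeq\opb{\delta}(\RD_XF_2\letens F_1)\ltens L_X\simeq\rhom(F_2,F_1)\ltens L_X$, using that $\cor_X\letens L_X\simeq\opb{q_2}L_X$ and $\opb{\delta}\opb{q_2}L_X\simeq L_X$. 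The hypothesis~\eqref{eq:dualconserv} enters through the use of Proposition~\ref{pro:dual} to know that $\RD_XF_2$ behaves well, or — more robustly — by running the same $\rsect_x$ argument as in (a) on the already-established source and target and appealing to Proposition~\ref{pro:dualconserv}.

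The main obstacle I anticipate is bookkeeping the $\rsect_x$ computation of the left-hand side of (a): one must justify that tensoring the wcc sheaf $\rhom(L_X,F_1)$ with the locally constant-near-$x$ sheaf $F_2$ and then taking $\rsect_x$ really produces $\RHom(L,\rsect_xF_1)\ltens F_{2,x}$ rather than something with $F_2$ entangled in the limit defining $\rsect_x$. This is where one uses that $F_2$ is isomorphic near $x$ to the constant sheaf $(F_{2,x})_U$ on a small ball $U$ (because its stalk is perfect and, after further shrinking, $F_2$ is locally constant there — or one argues directly that $\rsect_c(U;\rhom(L_X,F_1)\ltens F_2)\simeq\rsect_c(U;\rhom(L_X,F_1))\ltens F_{2,x}$ via the projection formula for the constant sheaf $(F_{2,x})_U$), together with Proposition~\ref{pro:homtensL} applied to the wcc sheaf $\rhom(L_X,F_1)$. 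Once that identification is in hand, Lemma~\ref{lem:noether} with $N=F_{2,x}$ perfect closes the argument immediately, and Proposition~\ref{pro:dualconserv} globalizes it.
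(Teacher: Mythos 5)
Your overall strategy---a pointwise reduction to Lemma~\ref{lem:noether} for (a), and the external product on $X\times X$ restricted to the diagonal for (b)---is the right one and is essentially the paper's, but both parts as written contain genuine errors.

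For (a): the statement does not assume~\eqref{eq:dualconserv}, yet you reduce to checking $\rsect_x$ of both sides and invoke Proposition~\ref{pro:dualconserv}, which does assume it; your argument therefore proves a strictly weaker statement. The fix is to use stalks instead: the functors $(\scbul)_x$ are jointly conservative on $\Derb(\cor_X)$ with no hypothesis at all, and stalks commute with $\ltens$, so
\[
(\rhom(L_X,F_1)\ltens F_2)_x\simeq\RHom(L,F_{1x})\ltens F_{2x}\simeq\RHom(L,F_{1x}\ltens F_{2x})\simeq(\rhom(L_X,F_1\ltens F_2))_x,
\]
where the outer identifications come from Proposition~\ref{pro:homtensL} and the middle one from Lemma~\ref{lem:noether} with $N=F_{2x}$ perfect. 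Your $\rsect_x$ route has a second problem: it needs $\rsect_x(A\ltens F_2)\simeq(\rsect_xA)\ltens F_{2x}$, which you justify by claiming $F_2$ is locally constant near $x$ after shrinking. That is false for a general $\R$-constructible sheaf (e.g.\ $\cor_{[0,\infty)}$ on $\R$ at $x=0$); perfectness of the stalk does not give local constancy, and Proposition~\ref{pro:homtensL} only handles tensoring with a \emph{constant} sheaf. So the left-hand-side computation as you describe it does not go through.

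For (b): the identity you need is $\rhom(F_2,H)\simeq\epb{\delta}\rhom(\opb{q_1}F_2,\epb{q_2}H)\simeq\epb{\delta}(\RD_XF_2\letens H)$, with $\epb{\delta}$ and not $\opb{\delta}$. With $\opb{\delta}$ your chain computes something else (essentially $\RD_XF_2\ltens H$), and---symptomatically---it commutes with $\ltens L_X$ for free, which is why you are left unsure where~\eqref{eq:dualconserv} enters. It enters precisely in commuting $\epb{\delta}$ with the tensor by $\opb{\delta}(\cor_X\letens L_X)$, i.e.\ Theorem~\ref{th:invim}~(b) applied to $f=\delta$; combined with~\eqref{eq:Dhom} and~\eqref{eq:etenstens} this gives $\rhom(F_2,F_1)\ltens L_X\simeq\epb{\delta}((\RD_XF_2\letens F_1)\ltens(\cor_X\letens L_X))\simeq\epb{\delta}(\RD_XF_2\letens(F_1\ltens L_X))\simeq\rhom(F_2,F_1\ltens L_X)$. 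Your fallback suggestion of ``running the same $\rsect_x$ argument as in (a)'' would again face the problem that $\rsect_x$ does not commute with $\ltens F_2$ or with $\rhom(F_2,\scbul)$ for non-constant $F_2$.
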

	\begin{proof}
		We may assume that $K=L_X$ is the constant sheaf associated with $L\in\Derb(\cor)$. The fact that 
		$F_1\ltens F_2$ and $\rhom(L_X,F_1\ltens F_2)$ belong to $\Derb_\wRc(\cor_X)$ follows from~\cite[Prop.~8.4.6]{KS90}.
		
		\spa
		(a)  Let $x\in X$. One has
		\eqn
		(\rhom(L_X,F_1)\ltens F_2)_x&\simeq&\rhom(L_X,F_1)_x\ltens (F_2)_x\simeq\RHom(L,(F_1)_x)\ltens (F_2)_x\\
		&\simeq&\RHom(L,(F_1)_x\ltens (F_2)_x)\simeq(\rhom(L_X,F_1\ltens F_2))_x.
		\eneqn
		The second and fourth isomorphisms follow from  Proposition~\ref{pro:homtensL} and 
		the third one from Lemma~\ref{lem:noether}.
		
		\spa
		(b) One has 
		\eqn
		\rhom(F_2,F_1)\ltens L_X&\simeq&\epb{\delta}(\RD_XF_2\letens F_1)\ltens \opb{\delta}(\cor_X\letens L_X)\\
		&\simeq&\epb{\delta}((\RD_XF_2\letens F_1)\ltens(\cor_X\letens L_X))\\
		&\simeq&\epb{\delta}((\RD_XF_2\ltens\cor_X)\letens(F_1\ltens L_X))
		\simeq\rhom(F_2,F_1\ltens L_X).
		\eneqn
		Here, the second isomorphism follows from Theorem~\ref{th:invim}~(b). The other ones follow from~\eqref{eq:Dhom} and~\eqref{eq:etenstens}.
	\end{proof}

	\subsection*{Direct images}
	Let $f\cl X\to Y$ be a morphism of real analytic manifolds. Let $F\in \Derb_\wRc(\cor_X)$, and let $M\in\Derb(\cor_Y)$ 
	be  locally constant. One can ask if the morphism
	\eqn
	&&\roim{f}F\ltens M\to \roim{f}(F\ltens\opb{f}M)
	\eneqn
	is an isomorphism. 
	The answer is negative in general, even if we require $F$ to be constructible, thanks to an example of~\cite[Rem.~4.4]{Ho23}.
	
	However, there is a positive answer when considering sheaves {\em constructible  up to infinity}. 
	Before proving the result for general direct images, let us establish it in the particular case of open embeddings.
	
	\begin{lemma}\label{lem:opendirim}
		Let $j\cl U\into X$ be the open embedding of a subanalytic relatively compact open subset $U$ of $X$. 
		Let $F\in \Derb_\wRc(\cor_U)$ and assume that there exists $G\in \Derb_\wRc(\cor_X)$ with $\opb{j}G\simeq F$. 
		Let $K\in\Derb(\cor_X)$ be locally constant. Then
		\banum
		\item
		$\reim{j}\rhom(\opb{j}K,F)\isoto \rhom(K,\reim{j}F)$.
		\item
		$\roim{j}F\ltens K\isoto \roim{j}(F\ltens\opb{j}K)$. 
		\eanum
	\end{lemma}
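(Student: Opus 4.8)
The plan is to reduce both statements to the previously established results for tensor–hom (Theorem~\ref{th:tenshom}) and inverse images (Theorem~\ref{th:invim}), using the special geometry of a subanalytic relatively compact open embedding $j\cl U\into X$. As before, since the problem is local on $X$ we may assume $L_X$ is the constant sheaf $L_X$ associated with some $L\in\Derb(\cor)$, so that $\opb{j}L_X\simeq L_U$. The key geometric input is that, because $U$ is subanalytic and relatively compact in $X$, the sheaf $\reim{j}F$ is weakly $\R$-constructible on $X$ whenever $F=\opb{j}G$ with $G\in\Derb_\wRc(\cor_X)$: indeed $\reim{j}F\simeq\reim{j}\opb{j}G\simeq G\ltens\cor_{XU}$, and $\cor_{XU}$ is $\R$-constructible since $U$ is subanalytic, so $\reim{j}F\in\Derb_\wRc(\cor_X)$ by the stability of weak $\R$-constructibility under $\ltens$. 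Similarly $\roim{j}F\simeq\RD_X\reim{j}\RD_UF$ is weakly $\R$-constructible. These identifications are what allow the general theorems to be applied.

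For part (a), I would compute $\reim{j}\rhom(L_U,F)$. Using the projection-type formula $\reim{j}\rhom(\opb{j}L_X,F)\simeq\reim{j}(\opb{j}L_X\tens[???])$ — more precisely, I would instead use that $\reim{j}$ commutes with $\opb{j}L_X$-twisting in the form $\reim{j}\opb{j}(\scbul)\simeq(\scbul)\ltens\cor_{XU}$ together with the isomorphism $\rhom(L_X,\scbul)\ltens\cor_{XU}\isoto\rhom(L_X,\scbul\ltens\cor_{XU})$, which is an instance of Theorem~\ref{th:tenshom}~(a) applied with $F_2=\cor_{XU}\in\Derb_\Rc(\cor_X)$. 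Chaining these: $\reim{j}\rhom(\opb{j}L_X,F)\simeq\reim{j}\rhom(\opb{j}L_X,\opb{j}\reim{j}F)$ (since $\opb{j}\reim{j}F\simeq F$); then, by adjunction $\reim{j}\rhom(\opb{j}H,K)\simeq\rhom(H,\reim{j}K)\tens\cor_{XU}$-type manipulations (writing $\reim{j}=(\scbul)_{XU}$ after $\roim{j}$, or using that $\reim{j}\opb{j}H\simeq H\tens\cor_{XU}$ and the internal-hom adjunction), one arrives at $\rhom(L_X,\reim{j}F)$ via Theorem~\ref{th:tenshom}~(a). The cleanest route is probably: $\reim{j}\rhom(\opb{j}L_X,F)$ equals $\rhom(L_X,\reim{j}F)$ if and only if it does after applying $\opb{i_x}$ for every $x\in X$, and for $x\in U$ both sides restrict (using base change for $j$ and Proposition~\ref{pro:homtensL}) to $\RHom(L,F_x)$, while for $x\notin U$ both sides have zero stalk when $x\notin\overline U$ and need the microlocal argument near $\partial U$ — this is where relative compactness and subanalyticity of $U$ enter, guaranteeing that $\reim{j}F$ is weakly cohomologically constructible so that Proposition~\ref{pro:homtensL} applies stalkwise.

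For part (b), assuming~\eqref{eq:dualconserv}, I would argue entirely by conservativity of $\prod_{x\in X}\rsect_x(\scbul)$ on $\Derb_\wcc(\cor_X)$ (Proposition~\ref{pro:dualconserv}), having first checked that $\roim{j}F$ and $\roim{j}(F\tens\opb{j}L_X)$ lie in $\Derb_\wRc(\cor_X)\subset\Derb_\wcc(\cor_X)$. The natural morphism $\roim{j}F\ltens L_X\to\roim{j}(F\ltens\opb{j}L_X)$ is the one to analyze. Applying $\rsect_x=\epb{i_x}$ and using $\rsect_x\roim{j}(\scbul)\simeq\rsect_x\RD_X\reim{j}\RD_U(\scbul)\simeq\RD(\reim{j}\RD_U(\scbul)_x)$ together with Proposition~\ref{pro:homtensL} (which gives $(\rsect_xK)\ltens L\isoto\rsect_x(K\tens L_X)$ for $K$ wcc), I expect both $\rsect_x(\roim{j}F\ltens L_X)$ and $\rsect_x\roim{j}(F\tens\opb{j}L_X)$ to be identified with $(\rsect_x\roim{j}F)\ltens L$ compatibly with the morphism, whence the claim by Proposition~\ref{pro:dualconserv}. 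Alternatively, and perhaps more transparently, I would derive (b) from (a) by duality: $\roim{j}F\ltens L_X\simeq\RD_X\rhom(\roim{j}F\tens L_X,\omega_X)\simeq\RD_X(\rhom(L_X,\RD_X\roim{j}F))$ using Lemma~\ref{lem:noether} in the form of perfectness... but $L$ is not assumed perfect here, so the self-dual route through (a) needs the hypothesis~\eqref{eq:dualconserv} precisely to convert the stalkwise isomorphism into a global one. The main obstacle is the behavior at the boundary $\partial U$: away from $\overline U$ everything vanishes and on $U$ everything is handled by base change plus Proposition~\ref{pro:homtensL}, but near $\partial U$ one must know that $\reim{j}F$ (resp.\ $\roim{j}F$) is well enough behaved — i.e.\ weakly $\R$-constructible, hence wcc — and this is exactly what the subanalyticity and relative compactness of $U$ provide via the stability of $\Derb_\wRc$ under the six operations in this restricted setting. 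I expect this boundary bookkeeping, rather than any single formula, to be the crux of the argument.
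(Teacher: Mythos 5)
Your part (a) is essentially the paper's argument: writing $\reim{j}F\simeq G\ltens\cor_{XU}$ and applying Theorem~\ref{th:tenshom}~(a) with $F_2=\cor_{XU}\in\Derb_\Rc(\cor_X)$ gives
$\reim{j}\rhom(\opb{j}L_X,F)\simeq\rhom(L_X,G)\ltens\cor_{XU}\simeq\rhom(L_X,G\ltens\cor_{XU})\simeq\rhom(L_X,\reim{j}F)$,
and you do identify this chain. The subsequent stalkwise discussion is superfluous, and no microlocal argument near $\partial U$ is needed: the only input is that $\reim{j}F\simeq G\ltens\cor_{XU}$ is weakly $\R$-constructible, hence wcc.

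Part (b), however, has a genuine gap. Your conservativity argument hinges on the identification $\rsect_x\roim{j}(F\ltens\opb{j}L_X)\simeq(\rsect_x\roim{j}F)\ltens L$, which you never justify: Proposition~\ref{pro:homtensL} computes $\rsect_x$ of a tensor product formed \emph{on $X$}, not of the direct image of a tensor product formed on $U$, and there is no base-change formula for $\epb{i_x}\roim{j}$. The route you sketch for it rests on $\roim{j}\simeq\RD_X\reim{j}\RD_U$, i.e.\ on biduality $\RD_U\RD_UF\simeq F$; this requires the stalks of $F$ to be perfect (i.e.\ $\R$-constructibility) and fails for merely weakly $\R$-constructible $F$, so it cannot be invoked here (the same objection applies to your preliminary claim that $\roim{j}F$ is weakly $\R$-constructible). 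The missing idea, which is what the paper uses, is the other half of the identification you already made for $\reim{j}$: for an open embedding, $\roim{j}F\simeq\rhom(\cor_{XU},G)$. Then Theorem~\ref{th:tenshom}~(b), applied with $F_2=\cor_{XU}$ and $F_1=G$, gives directly
$\roim{j}F\ltens L_X\simeq\rhom(\cor_{XU},G)\ltens L_X\simeq\rhom(\cor_{XU},G\ltens L_X)\simeq\roim{j}\opb{j}(G\ltens L_X)\simeq\roim{j}(F\ltens\opb{j}L_X)$,
with no further conservativity argument needed at this stage (the hypothesis~\eqref{eq:dualconserv} has already been consumed inside Theorem~\ref{th:tenshom}~(b)). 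This also settles, as a by-product, that $\roim{j}F$ is weakly $\R$-constructible.
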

	\begin{proof}
		As above, we may assume that $K=L_X$ is the constant sheaf associated with $L\in\Derb(\cor)$. 
		Let $G\in\Derb_\wRc(\cor_X)$ be such that $\opb{j}G\simeq F$.
		Then $\roim{j}F\simeq\rsect_UG\simeq  \rhom(\cor_{XU},G)$ and $\reim{j}F\simeq G_U\simeq \cor_{XU}\tens G$.
		
		\spa
		(a)  Note that $\opb{j}\rhom(L_X,G)\simeq \rhom(\opb{j}L_X,F)$. Using Theorem~\ref{th:tenshom} (a), we get
		\eqn
		\reim{j}\rhom(\opb{j}L_X,F)&\simeq&\cor_{XU}\tens\rhom(L_X,G)\simeq\rhom(L_X,G\tens\cor_{XU})\\
		&\simeq&\rhom(L_X,\reim{j}F).
		\eneqn
		
		\spa
		(b) Using Theorem~\ref{th:tenshom}~(b), we have
		\eqn
		\roim{j}F\ltens L_X&\simeq& \rhom(\cor_{XU},G)\ltens L_X\simeq \rhom(\cor_{XU},G\ltens L_X)\\
		&\simeq& \roim{j}\opb{j}(G\tens L_X)   \simeq\roim{j}(F\ltens\opb{j} L_X).
		\eneqn
	\end{proof}
	
	Recall the following notions extracted  from~\cite{Sc23}.
	\begin{definition}
		A b-analytic manifold  $\fX$  is a  pair $(X,\bX)$ with $X\subset \bX$ an open embedding of real analytic manifolds such that $X$ is relatively compact and subanalytic in $\bX$.
		One writes $j_X\colon X\hookrightarrow \widehat{X}$ for the inclusion.
		
		A morphism $f\cl \fX=(X,\bX) \to \fY=(Y,\bY)$ of b-analytic manifolds is a morphism of real analytic manifolds  $f\colon X\to Y$ such that
		the graph $\Gamma_f$ of $f$ in $X\times Y$    is subanalytic in $\bX\times\bY$.
	\end{definition}
	Let $F\in\Derb_\wRc(\cor_X)$. One says that $F$ is \emph{weakly constructible up to infinity} or simply \emph{weakly b-constructible} if 
	$\eim{j_X}F$ (or, equivalently, $\roim{j_X}F$) belongs to $\Derb_\wRc(\cor_\bX)$. One denotes by $\Derb_\wRc(\cor_\fX)$ the full triangulated subcategory of 
	$\Derb_\wRc(\cor_X)$ consisting of weakly b-constructible objects.

	\begin{theorem}\label{th:dirim}
		Let $f\cl\fX\to\fY$ be a morphism of b-analytic manifolds. Let $F\in\Derb_\wRc(\cor_\fX)$ and let $M\in\Derb(\cor_Y)$ be locally constant.
		Then $\reim{f}F$ and $\roim{f}F$ belong to $\Derb_\wRc(\cor_\fX)$ and 
		\banum
		\item
		$\reim{f}\rhom(\opb{f}M,F)\isoto \rhom(M,\reim{f}F)$,
		\item
		$(\roim{f}F)\ltens M\isoto \roim{f}(F\ltens\opb{f}M)$.
		\eanum
	\end{theorem}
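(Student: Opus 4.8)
The plan is to factor the morphism $f\cl \fX=(X,\bX)\to\fY=(Y,\bY)$ through its graph and reduce to the two cases already in hand: open embeddings of relatively compact subanalytic opens (Lemma~\ref{lem:opendirim}) and proper morphisms (where the statements are classical). Concretely, since $\Gamma_f\subset\bX\times\bY$ is subanalytic and $X\times Y$ is relatively compact in $\bX\times\bY$, the closure $\overline{\Gamma_f}$ is a compact subanalytic set, and $f$ factors as $X\xrightarrow{\ \iota_f\ }X\times Y\xrightarrow{\ q_2\ }Y$, where $\iota_f(x)=(x,f(x))$ is a closed embedding. The second projection $q_2\cl X\times Y\to Y$ is not proper, but if we first pass to $\bX\times Y\to Y$ it still is not; the standard trick is instead to use that $q_2$ restricted to $\overline{\Gamma_f}$ is proper (being a continuous map from a compact space). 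So I would factor $f$ as the composition of the closed embedding $X\into X\times Y$, the open embedding $X\times Y\into \bX\times Y$ (relatively compact subanalytic), and then the projection $\bX\times Y\to Y$ — but since none of the intermediate maps need be b-morphisms on the nose, some care in bookkeeping of the b-structures is needed.

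Rather than that, the cleaner route: write $f$ as $\fX\xrightarrow{\ \Gamma\ }\fX\times\fY\xrightarrow{\ \mathrm{pr}\ }\fY$ where $\mathrm{pr}=q_2$. The first map is a closed embedding of b-analytic manifolds, and for closed embeddings both (a) and (b) follow directly from Theorem~\ref{th:tenshom} exactly as in the proof of Lemma~\ref{lem:opendirim}: if $i\cl Z\into W$ is a closed embedding then $\roim{i}F\simeq i_!F\simeq \cor_{WZ}\tens(\text{extension})$ locally, and the projection formula together with $\opb{i}\rhom(L_W,-)\simeq\rhom(\opb{i}L_W,\opb{i}(-))$ gives the isomorphisms; one must check $\Gamma$-pushforward preserves weak b-constructibility, which is where the hypothesis $F\in\Derb_\wRc(\cor_\fX)$ enters via the defining property that $\roim{j_X}F\in\Derb_\wRc(\cor_\bX)$ and compatibility of the graph embedding with the compactifications. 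For the projection $q_2\cl \fX\times\fY\to\fY$, I would further decompose it as the open embedding $X\times Y\into \bX\times Y$ (which is relatively compact and subanalytic, so Lemma~\ref{lem:opendirim} applies after the obvious generalization from $\bX$ to an arbitrary ambient manifold, here $\bX\times Y$) followed by the proper projection $\bX\times Y\to Y$. For the proper projection $p\cl \bX\times Y\to Y$, we have $\reim{p}=\roim{p}$ on constructible input, the projection formula $\roim{p}(G\tens\opb{p}L_Y)\simeq\roim{p}G\tens L_Y$ holds in full generality for the $!$-pushforward and hence for $\roim{p}$ here, and the other isomorphism is the standard $\reim{p}\rhom(\opb{p}L_Y,-)\simeq\rhom(L_Y,\reim{p}(-))$ which holds because $L_Y$ is locally constant with the relevant duality behaving well; that $\roim{p}$ of a weakly b-constructible sheaf is weakly $\R$-constructible is \cite{KS90}*{Prop.~8.4.6} applied on $\bX\times Y$.

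Assembling: $\reim{f}$ and $\roim{f}$ send $\Derb_\wRc(\cor_\fX)$ into $\Derb_\wRc(\cor_\fY)$ because each of the three elementary factors does (graph embedding: immediate; relatively compact subanalytic open embedding: $\roim{j}F\in\Derb_\wRc$ by definition of weak b-constructibility, in the generalized ambient manifold; proper projection: \cite{KS90}*{Prop.~8.4.6}). For statement (a), chain together the open-embedding isomorphism from Lemma~\ref{lem:opendirim}~(a), the closed-embedding version of the same, and the proper-case isomorphism; all are compatible with composition of the corresponding functors, so functoriality of $\reim{(-)}\rhom(\opb{(-)}L,-)\to\rhom(L,\reim{(-)}(-))$ under composition of morphisms gives the result for $f$. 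Statement (b) is handled identically, invoking Lemma~\ref{lem:opendirim}~(b), its closed-embedding analogue, and the proper-case projection formula, each of which requires only the already-assumed Hypothesis~\eqref{eq:dualconserv} (in fact the proper and closed-embedding cases do not even need it).

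The main obstacle, I expect, is not any single isomorphism but the \emph{bookkeeping of b-structures} across the factorization: the intermediate spaces $X\times Y$ and $\bX\times Y$ must be equipped with compactifications making $\Gamma$ and the two projections into morphisms of b-analytic manifolds, and one must verify that $F$ remains weakly b-constructible at each stage — i.e.\ that $\opb{(j_{\bX}\times\mathrm{id})}$ or $\roim{}$ of it along the appropriate compactification still lands in $\Derb_\wRc$. This hinges on the subanalyticity of $\overline{\Gamma_f}$ in $\bX\times\bY$, which is exactly the defining condition on a b-morphism, so the input is available; but threading it correctly through \cite{Sc23}'s formalism is the delicate point. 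A secondary subtlety is that Lemma~\ref{lem:opendirim} is stated for an open embedding into a manifold $X$ with $F$ extending to $X$; for the factorization we need it with the ambient manifold being $\bX\times Y$ rather than a genuine b-compactification, but inspection of its proof shows only relative compactness and subanalyticity of the open subset were used, so the statement applies verbatim.
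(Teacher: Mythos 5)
Your overall strategy -- push everything to the product of the bordered compactifications via the graph, then combine Lemma~\ref{lem:opendirim} with properness on the support of the graph -- is exactly the idea of the paper's proof. The paper, however, does not factor $f$ into three maps; it works once and for all on $Z=\bX\times\bY$ via the kernel formulas
$\reim{f}F\simeq\opb{j_Y}\reim{q_2}(\opb{q_1}\eim{j_X}F\ltens\cor_{\Gamma_f})$ and
$\roim{f}F\simeq\epb{j_Y}\roim{q_2}\rhom(\cor_{\Gamma_f},\epb{q_1}\roim{j_X}F)$,
where the support of the integrand is contained in $\overline{\Gamma_f}$, which is genuinely compact in $Z$; the desired isomorphisms then follow by chaining Theorems~\ref{th:invim} and~\ref{th:tenshom}, Lemma~\ref{lem:opendirim}, and the classical commutation of $\roim{q_2}$ with $\rhom(L_{Z},\scbul)$.

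The concrete problem with your version is the choice of intermediate space $\bX\times Y$. You assert that $X\times Y$ is relatively compact and subanalytic in $\bX\times Y$ and that the projection $\bX\times Y\to Y$ is proper; both claims are false unless $Y$ (resp.\ $\bX$) is compact, since the closure of $X\times Y$ in $\bX\times Y$ is $\overline{X}\times Y$. Your invocation of Lemma~\ref{lem:opendirim} for this open embedding, and your final remark that ``only relative compactness and subanalyticity were used, so the statement applies verbatim,'' therefore rest on a hypothesis that does not hold. This is repairable -- either replace $\bX\times Y$ by $\bX\times\bY$ (where $X\times Y$ \emph{is} relatively compact subanalytic, and where the graph is relatively compact so that $q_2$ is proper on the relevant supports, at the cost of a final restriction $\opb{j_Y}$ resp.\ $\epb{j_Y}$ back to $Y$), or check that the proof of Lemma~\ref{lem:opendirim} only needs $\cor_{XU}$ to be $\R$-constructible, which holds for any subanalytic open $U$ -- but as written the factorization does not go through. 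A second point you flag but do not resolve is the b-structure bookkeeping (that each intermediate pushforward stays weakly $\R$-constructible); in the paper this is immediate because $\eim{j_X}F\in\Derb_\wRc(\cor_\bX)$ by definition of $\Derb_\wRc(\cor_\fX)$, $\cor_{\Gamma_f}$ is weakly $\R$-constructible on $\bX\times\bY$ by the definition of a b-morphism, and \cite{KS90}*{Prop.~8.4.6} applies to the proper-on-support pushforward $\reim{q_2}$; you would need to spell out the analogous facts for each of your three factors.
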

	\begin{proof}
	Since the problem is local on $Y$, 
		we may assume that  $M=L_Y$  is the constant sheaf associated with some $L\in\Derb(\cor)$. 
		Set for short $Z\vcentcolon= \bX\times \bY$ and denote by $q_1$ and $q_2$ the first and second projection from $Z$ to $\bX$ and $\bY$, respectively. Denote by $\Gamma_f\subset Z$ the graph of $f$. Note that $\Gamma_f$ is subanalytic in $Z$ by definition, and relatively compact in $Z$ since it is contained in the relatively compact subset $X\times Y$.
		
		\spa
		One has
		\eqn
		\reim{f}F&\simeq&\opb{j_Y}\reim{q_2}(\opb{q_1}\eim{j_X}F\ltens\cor_{\Gamma_f}),\\
		\roim{f}F&\simeq&\epb{j_Y}\roim{q_2}\rhom(\cor_{\Gamma_f},\epb{q_1}\roim{j_X}F).
		\eneqn
		Note that the supports of $\opb{q_1}\eim{j_X}F\tens\cor_{\Gamma_f}$ and $\rhom(\cor_{\Gamma_f}, \epb{q_1}\roim{j_X}F)$
		are contained in $\ol\Gamma_f$ and hence compact in $Z$. 
		
		\spa
		(a) Let us apply the functor  $\rhom(L_Y,\scbul)$ to the first isomorphism. We get
		\eqn
		\rhom(L_Y,\reim{f}F)&\simeq& \rhom(\opb{j_Y}L_\bY,\opb{j_Y}\roim{q_2}(\opb{q_1}\eim{j_X}F\ltens\cor_{\Gamma_f}))\\
		&\simeq& \opb{j_Y}\rhom(L_\bY,\roim{q_2}(\opb{q_1}\eim{j_X}F\ltens\cor_{\Gamma_f}))\\
		&\simeq& \opb{j_Y}\roim{q_2}\rhom(L_{\bX\times\bY},\opb{q_1}\eim{j_X}F\ltens\cor_{\Gamma_f})\\
		&\simeq& \opb{j_Y}\roim{q_2}(\rhom(\opb{q_1}L_\bX,\opb{q_1}\eim{j_X}F)\ltens\cor_{\Gamma_f})\\
		&\simeq& \opb{j_Y}\roim{q_2}(\opb{q_1}\rhom(L_\bX,\eim{j_X}F)\ltens\cor_{\Gamma_f})\\
		&\simeq& \opb{j_Y}\roim{q_2}(\opb{q_1}\eim{j_X}\rhom(L_X,F)\ltens\cor_{\Gamma_f})\\
		&\simeq& \reim{f} \rhom(f^{-1}L_Y,F).
		\eneqn
		The second and fifth isomorphism use Theorem~\ref{th:invim} (a), the fourth uses Theorem~\ref{th:tenshom} (a), and the sixth isomorphism uses Lemma~\ref{lem:opendirim} (a). On the other hand, the third isomorphim is classical (see \cite[(2.6.15)]{KS90}).
		
		\spa
		(b) The proof is completely analogous, using parts (b) of the statements mentioned above instead.
	\end{proof}
	Recall the following isomorphisms which hold for any $F\in \Derb(\cor_X)$, $L\in\Derb(\cor)$ and any open $U\subset X$. One has
	\eq\label{eq:secthom}
	&&\ba{l}\rsect_c(U;F\ltens L_X)\isoto \rsect_c(U;F)\ltens L,\\
	\rsect(U;\rhom(L_X,F))\simeq \RHom(L,\rsect(U;F)).
	\ea\eneq	
	
	\begin{corollary}
		Let $F\in\Derb_\wRc(\cor_X)$ and let $L\in\Derb(\cor)$. Let $U$ be an open relatively compact subanalytic subset of $X$. Then 
		\banum
		\item
		$\rsect_c(U;\rhom(L_X,F))\isoto \RHom(L,\rsect_c(U;F))$,
		\item
		$\rsect(U,F\ltens L_X)\simeq \rsect(U;F)\ltens L$.
		\eanum
	\end{corollary}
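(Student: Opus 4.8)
The plan is to reduce the statement to Theorem~\ref{th:dirim} applied to a well-chosen morphism of b-analytic manifolds, namely the open embedding $j\cl U\into X$ viewed inside an appropriate b-analytic structure. Since $U$ is relatively compact and subanalytic in $X$, the pair $\fU=(U,X)$ is a b-analytic manifold, and the pair $\fX=(X,X)$ is trivially a b-analytic manifold as well. First I would check that $j\cl \fU\to \fX$ is a morphism of b-analytic manifolds: its graph $\Gamma_j\subset U\times X$ is the (restriction of the) diagonal, hence subanalytic in $X\times X$, so the condition holds. Next I would verify that $\opb{j}F\in\Derb_\wRc(\cor_\fU)$, i.e.\ that $F|_U$ is weakly b-constructible: indeed $\roim{j}(F|_U)\simeq \rsect_U F\simeq\rhom(\cor_{XU},F)$, which belongs to $\Derb_\wRc(\cor_X)$ because $\cor_{XU}$ is weakly $\R$-constructible (as $U$ is subanalytic) and $\Derb_\wRc$ is stable by $\rhom$.

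With these verifications in place, I would apply Theorem~\ref{th:dirim} to the morphism $j\cl\fU\to\fX$ and the sheaf $F|_U\in\Derb_\wRc(\cor_\fU)$, together with the locally constant sheaf $L_X$, to obtain
\[
\reim{j}\rhom(\opb{j}L_X,F|_U)\isoto \rhom(L_X,\reim{j}(F|_U)),\qquad
(\roim{j}(F|_U))\ltens L_X\isoto \roim{j}(F|_U\ltens\opb{j}L_X),
\]
the second under Hypothesis~\eqref{eq:dualconserv}. Then I would apply the section functor $\rsect(X;\scbul)$ (equivalently $\roim{a_X}$) to both sides. On the right-hand side of the first isomorphism, $\rsect(X;\rhom(L_X,\reim{j}(F|_U)))\simeq\RHom(L,\rsect(X;\reim{j}(F|_U)))$ by~\eqref{eq:secthom}, and $\rsect(X;\reim{j}(F|_U))\simeq\rsect_c(U;F)$ since $\reim{j}$ is extension by zero and $a_X\circ j=a_U$ with $\reim{(a_U)}=\rsect_c(U;\scbul)$. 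On the left-hand side, $\rsect(X;\reim{j}(\scbul))\simeq\rsect_c(U;\scbul)$ again, and $\opb{j}L_X\simeq L_U$, so $\rsect(X;\reim{j}\rhom(L_U,F|_U))\simeq\rsect_c(U;\rhom(L_X,F))$; combining gives (a). For (b), applying $\rsect(X;\scbul)$ to the second isomorphism and using the projection formula $\rsect(X;\roim{j}(F|_U))\ltens L\simeq\rsect(X;(\roim{j}(F|_U))\ltens L_X)$ (valid because $\roim{j}(F|_U)\simeq\rhom(\cor_{XU},F)$ and one has $\rsect(X;H)\simeq\rsect(U;F)$ for $H=\rsect_UG$-type objects, or more simply $\rsect(X;\roim{j}K)\simeq\rsect(U;K)$), together with $\rsect(X;\roim{j}(F|_U\ltens L_U))\simeq\rsect(U;F\ltens L_X)$, yields $\rsect(U;F)\ltens L\simeq\rsect(U;F\ltens L_X)$.

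The main obstacle I anticipate is not any single deep step but rather the bookkeeping of the two adjoint pairs at infinity: one must be careful that $\rsect(X;\roim{j}(\scbul))\simeq\rsect(U;\scbul)$ and $\rsect(X;\reim{j}(\scbul))\simeq\rsect_c(U;\scbul)$, and that these identifications are compatible with the isomorphisms furnished by Theorem~\ref{th:dirim}. In particular, for part (b) one needs the ordinary (non-proper) section functor on the left, so the relevant identity is $\rsect(X;\roim{j}K)\simeq\rsect(U;K)$, which holds since $a_X\circ j=a_U$ and $\roim{(a_X)}\roim{j}\simeq\roim{(a_U)}$. Everything else — the verification that $j$ is a b-morphism, that $F|_U$ is weakly b-constructible, and the application of~\eqref{eq:secthom} — is routine. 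Alternatively, one could bypass Theorem~\ref{th:dirim} entirely and argue directly: write $\rsect_c(U;F)\simeq\rsect(X;\cor_{XU}\ltens F)$ and $\rsect(U;F)\simeq\rsect(X;\rhom(\cor_{XU},F))$, then invoke Theorem~\ref{th:tenshom} with $F_2=\cor_{XU}$ (which is $\R$-constructible since $U$ is subanalytic and relatively compact) followed by~\eqref{eq:secthom}; this gives a shorter path and I would likely present that one.
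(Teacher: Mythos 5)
Your primary route has a genuine gap: the pair $(X,X)$ is \emph{not} a b-analytic manifold unless $X$ is compact, since the definition requires the open subset to be relatively compact in the ambient manifold. The corollary makes no compactness assumption on $X$, so you cannot apply Theorem~\ref{th:dirim} to $j\cl\fU\to(X,X)$ as stated. (The graph $\Gamma_j$ does happen to be relatively compact in $X\times X$, so the \emph{proof} of Theorem~\ref{th:dirim} would still run, but that is not the theorem you are invoking.) The paper sidesteps this entirely by taking the target to be a point: it applies Theorem~\ref{th:dirim} to $f=a_U\cl\fU=(U,X)\to\rmpt$, where $(\rmpt,\rmpt)$ is trivially b-analytic, $\Gamma_{a_U}=U$ is subanalytic in $X$, and $\reim{a_U}$, $\roim{a_U}$ are directly $\rsect_c(U;\scbul)$ and $\rsect(U;\scbul)$ --- no post-composition with $\rsect(X;\scbul)$ and no bookkeeping of the two adjoint pairs is needed. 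Your verification that $F|_U$ is weakly b-constructible (via $\roim{j}(F|_U)\simeq\rhom(\cor_{XU},F)$) is correct and is the one substantive check the paper's one-line proof leaves implicit.

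Your alternative route at the end --- writing $\rsect_c(U;\scbul)\simeq\rsect(X;\cor_{XU}\ltens\scbul)$ and $\rsect(U;\scbul)\simeq\rsect(X;\rhom(\cor_{XU},\scbul))$ and invoking Theorem~\ref{th:tenshom} with $F_2=\cor_{XU}$ followed by~\eqref{eq:secthom} --- is sound and genuinely different from the paper's argument: it bypasses Theorem~\ref{th:dirim} (and hence the b-analytic formalism) altogether, at the cost of one small point you should make explicit in part~(b): the first line of~\eqref{eq:secthom} concerns $\rsect_c$, so to conclude $\rsect(X;\rhom(\cor_{XU},F)\ltens L_X)\simeq\rsect(X;\rhom(\cor_{XU},F))\ltens L$ you must observe that $\rhom(\cor_{XU},F)$ is supported in the compact set $\overline{U}$, whence $\rsect$ and $\rsect_c$ agree on it. If you present the direct argument, drop the first route or restrict its target to a point.
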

	\begin{proof}
		Apply Theorem~\ref{th:dirim} to the sheaf $F|_U$ with $\fX=(U,X)$, $\fY=(\rmpt,\rmpt)$ and $f=a_U$. 
	\end{proof}
	
In particular, when $X_\infty=(X,\bX)$ is b-analytic and $F\in\Derb_\wRc(\cor_\fX)$, one obtains
	\eqn
	&&\rsect_c(X;\rhom(L_X,F))\isoto \RHom(L,\rsect_c(X;F)),\\
	&&\rsect(X;F\ltens L_X)\simeq \rsect(X;F)\ltens L.
	\eneqn
	
	\subsection*{Duality}
	From our above result, we obtain slight generalisations of well-known statements about the behaviour of the duality functor with respect to direct and inverse images.
	
	Recall that for any $G\in\Derb(\cor_Y)$ and a continuous $f\colon X\to Y$, one has $\RD_X\opb{f}G\simeq\epb{f}\RD_YG$. In general, the isomorphism $\RD_X\epb{f}G\simeq\opb{f}\RD_YG$ does not hold, but it holds if $f$ is a morphism of real analytic manifolds and $G\in \Derb_\Rc(\cor_Y)$ (cf.~\cite[Exe.~VIII.3]{KS90} (ii)). The following corollary generalises this statement.
	
	\begin{corollary}
		Let $f\cl X\to Y$ be a morphism of real analytic manifolds. Let $G\in \Derb_\Rc(\cor_Y)$ and assume that $M\in\Derb(\cor_Y)$ is locally constant. 
		Then
		\eqn
		&&\RD_X\epb{f}(G\ltens M)\simeq \opb{f}\RD_Y(G\ltens M).
		\eneqn
	\end{corollary}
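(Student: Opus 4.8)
The plan is to reduce the desired duality isomorphism to the inverse-image results already established, together with the standard compatibility $\RD_X\opb{f}(\scbul)\simeq\epb{f}\RD_Y(\scbul)$ recalled just above the statement. The key observation is that $G\ltens L_Y$ is weakly $\R$-constructible (being a tensor product of such sheaves), so the general machinery applies, but more importantly that duality turns the exceptional inverse image into the ordinary one.

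First I would apply the recalled isomorphism $\RD_X\opb{f}H\simeq\epb{f}\RD_YH$ with $H=\RD_Y(G\ltens L_Y)$. Since $G$ is $\R$-constructible and $L_Y$ is locally constant, $G\ltens L_Y$ is $\R$-constructible, hence the biduality morphism $G\ltens L_Y\to\RD_Y\RD_Y(G\ltens L_Y)$ is an isomorphism (this is the standard fact that $\R$-constructible sheaves are reflexive, cf.\ \cite{KS90}*{Prop.~3.4.3} — one uses here that stalks of $\R$-constructible sheaves are perfect, hence reflexive in $\Derb(\cor)$). Therefore
\eqn
&&\RD_X\opb{f}\RD_Y(G\ltens L_Y)\simeq \epb{f}\RD_Y\RD_Y(G\ltens L_Y)\simeq \epb{f}(G\ltens L_Y).
\eneqn
Applying $\RD_X$ to both sides and using that $\RD_X\RD_X\simeq\id$ on $\R$-constructible sheaves (and $\epb{f}$ of an $\R$-constructible sheaf is again weakly $\R$-constructible, while one needs $\R$-constructibility for reflexivity — here one should be slightly careful and instead apply $\RD_X$ directly to the left-hand side, which is manifestly $\RD_X$ of something) yields
\eqn
&&\RD_X\epb{f}(G\ltens L_Y)\simeq \RD_X\RD_X\opb{f}\RD_Y(G\ltens L_Y)\simeq\opb{f}\RD_Y(G\ltens L_Y),
\eneqn
the last step again by reflexivity of the $\R$-constructible sheaf $\RD_Y(G\ltens L_Y)$, whose inverse image $\opb{f}\RD_Y(G\ltens L_Y)$ is weakly $\R$-constructible, so $\RD_X\RD_X$ of it is itself — but to be safe one invokes reflexivity on $Y$ before pulling back, writing $\opb{f}\RD_Y(G\ltens L_Y)\simeq\opb{f}\RD_Y(G\ltens L_Y)$ trivially and noting $\RD_X\opb{f}\RD_Y(\scbul)\simeq\epb{f}\RD_Y\RD_Y(\scbul)\simeq\epb{f}(\scbul)$ directly.

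The main obstacle — and really the only subtlety — is bookkeeping with which objects are genuinely reflexive: the clean statement $\RD\RD\simeq\id$ holds for $\R$-constructible sheaves, so I must make sure every time I cancel a double dual I am doing so on a sheaf known to be $\R$-constructible (here $G\ltens L_Y$ and $\RD_Y(G\ltens L_Y)$ on $Y$ are, since $\R$-constructibility is stable by $\tens$ with a locally constant sheaf and by duality), and never on the weakly $\R$-constructible pullbacks to $X$. Once the argument is arranged so that all biduality isomorphisms are invoked on $Y$ before applying $\opb{f}$ or $\epb{f}$, and all applications of $\RD_X\opb{f}\simeq\epb{f}\RD_Y$ are used in the correct direction, the proof is a two-line chain of isomorphisms. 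Note that, in contrast to Theorem~\ref{th:invim}~(b), Hypothesis~\eqref{eq:dualconserv} is not needed here: everything follows formally from reflexivity of $\R$-constructible sheaves and the duality–inverse-image compatibility. I would present the proof simply as:
\begin{proof}
Since $G\ltens L_Y\in\Derb_\Rc(\cor_Y)$, it is reflexive, and so is $\RD_Y(G\ltens L_Y)$. Hence, using the isomorphism $\RD_X\opb{f}(\scbul)\simeq\epb{f}\RD_Y(\scbul)$ recalled above,
\eqn
\RD_X\epb{f}(G\ltens L_Y)&\simeq&\RD_X\epb{f}\RD_Y\RD_Y(G\ltens L_Y)\\
&\simeq&\RD_X\RD_X\opb{f}\RD_Y(G\ltens L_Y)\simeq\opb{f}\RD_Y(G\ltens L_Y),
\eneqn
the last isomorphism by reflexivity of $\RD_Y(G\ltens L_Y)$ and stability of $\R$-constructibility under $\opb{f}$ when the source sheaf has perfect stalks.
\end{proof}
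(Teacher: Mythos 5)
Your argument has a genuine gap: it rests twice on biduality ($\RD_Y\RD_Y\simeq\id$, resp.\ $\RD_X\RD_X\simeq\id$) applied to $G\ltens L_Y$, to $\RD_Y(G\ltens L_Y)$ and to $\opb{f}\RD_Y(G\ltens L_Y)$, and none of these objects is $\R$-constructible in general. The corollary only assumes $L_Y$ locally constant with stalk an arbitrary $L\in\Derb(\cor)$, not a perfect one; hence $(G\ltens L_Y)_y\simeq G_y\ltens L$ need not be perfect, so $G\ltens L_Y$ is merely \emph{weakly} $\R$-constructible and reflexivity fails. Concretely, take $\cor$ a field, $G=\cor_Y$ and $L$ an infinite-dimensional vector space: then $\RD\RD L=L^{**}\not\simeq L$, so already the first step $G\ltens L_Y\simeq\RD_Y\RD_Y(G\ltens L_Y)$ is false. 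Your claim that ``$\R$-constructibility is stable by $\tens$ with a locally constant sheaf'' is exactly the point at which the hypothesis you would need (perfect stalks of $L_Y$) is silently smuggled in; with that extra hypothesis the statement is essentially the classical one from \cite{KS90}, whereas the corollary is meant to be a generalisation beyond it.

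The paper's proof is arranged precisely to avoid dualizing $G\ltens L_Y$: it first splits $\epb{f}(G\ltens L_Y)\simeq\epb{f}G\ltens\opb{f}L_Y$ using Theorem~\ref{th:invim}~(b) (which is where Hypothesis~\eqref{eq:dualconserv} enters --- so your remark that the hypothesis is not needed is also not borne out by the paper's argument), then uses tensor--hom adjunction to write $\RD_X(\epb{f}G\ltens\opb{f}L_Y)\simeq\rhom(\opb{f}L_Y,\RD_X\epb{f}G)$, invokes $\RD_X\epb{f}G\simeq\opb{f}\RD_YG$ only for the genuinely $\R$-constructible $G$, and finally applies Theorem~\ref{th:invim}~(a) to commute $\opb{f}$ with $\rhom(L_Y,\scbul)$. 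In this way biduality is only ever used on $G$ itself. To repair your proof you would either have to add the hypothesis that $L_Y$ has perfect stalks (weakening the statement), or restructure it along the lines above.
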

	\begin{proof}
		One has the chain of isomorphisms
		\eqn
		\RD_X\epb{f}(G\ltens M)&\simeq&\rhom(\epb{f}G\ltens \opb{f}M,\omega_X)\\
		&\simeq&\rhom(\opb{f}M, \RD_X(\epb{f}G))\simeq \rhom(\opb{f}M, \opb{f}\RD_YG)\\
		&\simeq&\opb{f}\rhom(M, \RD_YG)\simeq \opb{f}\RD_Y(G\ltens M).
		\eneqn
	Here, we have used Theorem~\ref{th:invim} (b) in the first isomorphism. Note also that the third isomorphism uses the classical fact (as mentioned above) that $\RD_X\epb{f}G\simeq\opb{f}\RD_YG$ for $G\in \Derb_\Rc(\cor_Y)$.
	\end{proof}
	
	Recall that for any $F\in\Derb(\cor_X)$, one has $\RD_Y\reim{f}F\simeq\roim{f}\RD_XF$. On the contrary, the isomorphism $\reim{f}\RD_XF\simeq \RD_Y\roim{f}F$ does not hold in general. However, it holds under the assumption that $f$ is a morphism of real analytic manifolds and that $F$ and $\reim{f}\RD_X F$ are $\mathbb{R}$-constructible complexes (cf.~\cite[Exe.~VIII.3]{KS90} (iii)). In particular, this means that it holds if $F\in \Derb_\Rc(\cor_\fX)$, since the six functors preserve $\mathbb{R}$-constructibility in the b-analytic setting (see \cite[Cor.~2.13]{Sc23}). The following corollary is a generalisation of the latter statement.
	
	\begin{corollary}
		Let $f\cl\fX\to\fY$ be a morphism of b-analytic manifolds. Let $F\in\Derb_\Rc(\cor_\fX)$, and let $M\in\Derb(\cor_Y)$ be locally constant. Then
		\eqn
		&&\reim{f}\RD_X(F\ltens\opb{f}M)\simeq \RD_Y(\roim{f}F\ltens M).
		\eneqn
	\end{corollary}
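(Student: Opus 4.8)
The plan is to chain together the duality isomorphisms recalled just before the statement with Theorem~\ref{th:dirim}~(b). First I would rewrite the left-hand side: since $\RD_X\opb{f}L_Y\simeq\epb{f}\RD_YL_Y$ is not quite what we need (the roles of $\opb{f}$ and $\RD$ have to be arranged carefully), the cleaner starting point is the standard compatibility $\RD_X(F\ltens\opb{f}L_Y)\simeq\rhom(\opb{f}L_Y,\RD_XF)$, which holds for any $F$ and any $L_Y$ by the tensor-hom adjunction together with $\rhom(\opb{f}L_Y,\omega_X)\simeq\opb{f}L_Y{}^{\vee}\ltens\omega_X$ when $L_Y$ is locally constant with perfect stalks (or, more robustly, just $\RD_X(F\ltens\opb{f}L_Y)\simeq\rhom(\opb{f}L_Y,\RD_X F)$ directly). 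Applying $\reim{f}$ and using $\RD_Y\reim{f}=\roim{f}\RD_X$ in the form $\reim{f}\RD_X(\scbul)\simeq\RD_Y\roim{f}(\scbul)$, we get
\begin{equation*}
\reim{f}\RD_X(F\ltens\opb{f}L_Y)\simeq\reim{f}\rhom(\opb{f}L_Y,\RD_XF).
\end{equation*}

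Next I would apply Theorem~\ref{th:dirim}~(a) to the sheaf $\RD_XF$: since $F\in\Derb_\Rc(\cor_\fX)$, its dual $\RD_XF$ is again (weakly) b-constructible — this uses Proposition~\ref{pro:dual} together with the fact that duality preserves $\Derb_\wRc$ and commutes with $\roim{j_X}$ on relatively compact subanalytic opens (so that b-constructibility is preserved) — and hence
\begin{equation*}
\reim{f}\rhom(\opb{f}L_Y,\RD_XF)\simeq\rhom(L_Y,\reim{f}\RD_XF).
\end{equation*}
Then I would push the duality through the direct image on the right: $\reim{f}\RD_XF\simeq\RD_Y\roim{f}F$ (again by $\RD_Y\reim{f}\simeq\roim{f}\RD_X$, applied with $\RD_XF$ in place of $F$ and using $\RD_X\RD_X\simeq\operatorname{id}$ on $\Derb_\Rc$), so that
\begin{equation*}
\rhom(L_Y,\reim{f}\RD_XF)\simeq\rhom(L_Y,\RD_Y\roim{f}F)\simeq\RD_Y(\roim{f}F\ltens L_Y),
\end{equation*}
the last step being once more the tensor-hom/duality identity $\RD_Y(H\ltens L_Y)\simeq\rhom(L_Y,\RD_YH)$ with $H=\roim{f}F$. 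Composing the three displayed isomorphisms yields the claim. Note that Hypothesis~\eqref{eq:dualconserv} enters only through Theorem~\ref{th:dirim}~(b)'s dual counterpart, but since we have routed the argument through part~(a) applied to $\RD_XF$, the hypothesis is actually needed just to know that part~(a) of Theorem~\ref{th:dirim} is available — which it is unconditionally. So I would double-check whether \eqref{eq:dualconserv} is genuinely required here, and if not, state the corollary without it; as written in the excerpt the corollary does not explicitly assume it, which supports routing through part~(a).

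The main obstacle I anticipate is bookkeeping around constructibility hypotheses and the self-duality $\RD_X\RD_X\simeq\operatorname{id}$: that identity needs $F$ to have perfect stalks, which is exactly why the corollary assumes $F\in\Derb_\Rc(\cor_\fX)$ rather than merely $\Derb_\wRc$, and one must also check that $\RD_XF$ inherits b-constructibility so that Theorem~\ref{th:dirim}~(a) applies to it — this follows from Proposition~\ref{pro:dual} combined with the stability of $\Derb_\wRc$ under $\RD_X$ and under $\roim{j_X}$ for $j_X$ a relatively compact subanalytic open embedding. A secondary point is to make sure the duality identity $\RD_X(F\ltens\opb{f}L_Y)\simeq\rhom(\opb{f}L_Y,\RD_XF)$ is invoked in a form valid for $L_Y$ merely locally constant (not necessarily with perfect stalks); this is fine because it is just the adjunction isomorphism $\rhom(F\ltens\opb{f}L_Y,\omega_X)\simeq\rhom(\opb{f}L_Y,\rhom(F,\omega_X))$, requiring no finiteness at all.
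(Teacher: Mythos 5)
Your proof is correct and follows essentially the same route as the paper's: rewrite $\RD_X(F\ltens\opb{f}L_Y)$ as $\rhom(\opb{f}L_Y,\RD_XF)$ by adjunction, apply Theorem~\ref{th:dirim}~(a) to $\RD_XF$, commute duality with the direct image via $\RD_Y\reim{f}\simeq\roim{f}\RD_X$ and biduality on $\Derb_\Rc$, and finish with the adjunction on $Y$. Your side remarks---that Hypothesis~\eqref{eq:dualconserv} is not needed since the argument goes through part~(a), and that one must check $\RD_XF$ remains b-constructible---are accurate and consistent with how the paper states and proves the corollary.
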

	\begin{proof}
		One has the chain of isomorphisms
		\eqn
		\reim{f}\RD_X(F\ltens\opb{f}M)&\simeq &\reim{f}\rhom(\opb{f}M,\RD_XF)\\
		&\simeq& \rhom(M,\reim{f}\RD_XF)\simeq \rhom(M,\RD_Y\roim{f}F)\\
		&\simeq &\RD_Y(\roim{f}F\ltens M).
		\eneqn
	Here, we have used Theorem~\ref{th:dirim} (a) in the second isomorphism. Note that in the third isomorphism we have used the fact that $\reim{f}\RD_XF\simeq \RD_Y\roim{f}F$ for $F\in\Derb_\Rc(\cor_\fX)$, as mentioned above.
	\end{proof}
		
	\subsection*{Field extensions and micro-support}
Consider the case where $\cor$ is a field and $\corex$ is a field extension of $\cor$. Then, of course, all preceding results apply in particular with $K=\corex_X$ and $M=\corex_Y$,  i.e.\ taking the constant sheaf with stalk $\corex$ as the locally constant sheaves in the above statements. This is the case of interest in \cite{Ho23}, where extension of scalars for sheaves of vector spaces is studied.
		Our above results apply, however, to more general situations, since we put weaker restrictions on the objects involved, as described in the introduction.

		Recall  that \cite[Rem.~5.1.5]{KS90} asserts that if 
		$\for$ denotes the forgetful functor
		\eqn
		&&\for\cl\Derb(\corex_X)\to\Derb(\cor_X),
		\eneqn
		and if $F\in\Derb(\corex_X)$, then the micro-support of $F$ and that of $\for(F)$ are the same.
 However, this result does not mean that for $F\in\Derb(\cor_X)$, one has $\SSi(F)=\SSi(F\tens[\cor_X]\corex_X)$. We shall discuss this last point now.
	\medskip
		
	Remark first that for $F,K\in\Derb(\cor_X)$ with $K$ locally constant, one has
	\eqn
	&&\SSi(K\ltens F)\subset\SSi(F),\quad \SSi(\rhom(K,F))\subset\SSi(F).
	\eneqn
	Indeed, one has by~\cite[Prop.~5.4.14]{KS90}
	\eqn
	&&\SSi(K\ltens F)\subset T^*_XX+\SSi(F)=\SSi(F),
	\eneqn
	and similarly with $\rhom(K,F)$.
	
Finally, let us state the following useful result, well-known to specialists,  which says that the converse inclusion is true over a field.

\begin{proposition}
Assume that $\cor$ is a field.  
Let $F,K\in\Derb(\cor_X)$ with $K$  locally constant, $X$ connected and $K\neq0$. Then
\eqn
		&&\SSi(K\tens F)=\SSi(F),\quad  \SSi(\rhom(K,F))=\SSi(F).
		\eneqn
	\end{proposition}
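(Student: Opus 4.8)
The inclusions ``$\subset$'' in both asserted equalities are exactly \eqref{eq:SSiL}, so the plan is to establish the reverse inclusions $\SSi(F)\subset\SSi(L_X\tens F)$ and $\SSi(F)\subset\SSi(\rhom(L_X,F))$. Since the micro-support is a local invariant and $L_X$ is locally constant, I would first work on a small open subset of $X$ and thereby assume, exactly as in the earlier proofs of this paper, that $L_X$ is the constant sheaf associated with some $L\in\Derb(\cor)$; since $L_X\neq0$ we take $L\neq0$.

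The heart of the matter is that, $\cor$ being a field, there is an integer $j_0$ for which $\cor\,[-j_0]$ is a direct summand of $L$ in $\Derb(\cor)$. Indeed $L\simeq\bigoplus_jH^j(L)\,[-j]$; choosing $j_0$ with $H^{j_0}(L)\neq0$ and writing $H^{j_0}(L)\simeq\cor\oplus V$ as $\cor$-modules (every $\cor$-vector space is free, so $\cor$ is a retract of any nonzero one), we obtain $L\simeq\cor\,[-j_0]\oplus L'$ for some $L'\in\Derb(\cor)$. Passing to constant sheaves and then applying the functors $\scbul\tens F$ and $\rhom(\scbul,F)$, both additive, I would conclude that $F\,[-j_0]\simeq\cor_X\,[-j_0]\tens F$ is a direct summand of $L_X\tens F$, and that $F\,[j_0]\simeq\rhom(\cor_X\,[-j_0],F)$ is a direct summand of $\rhom(L_X,F)$.

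Finally I would invoke the elementary fact that a direct summand has no larger micro-support: if $B\simeq A\oplus A'$ in $\Derb(\cor_X)$ then $\SSi(A)\subset\SSi(B)$. This is immediate from the definition of $\SSi$, since the condition ``$(x_0,\xi_0)\notin\SSi(B)$'' amounts to the vanishing of certain objects obtained from $B$ by additive functors (sections with supports in closed half-spaces, followed by stalks), and such vanishing is inherited by the retract $A$. Combined with the shift-invariance $\SSi(G\,[n])=\SSi(G)$, this yields $\SSi(F)=\SSi(F\,[-j_0])\subset\SSi(L_X\tens F)$ and $\SSi(F)=\SSi(F\,[j_0])\subset\SSi(\rhom(L_X,F))$, and together with \eqref{eq:SSiL} the proof is complete.

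I do not expect a serious obstacle: the argument is short once one sees the splitting trick. The only points needing a little care are the reduction to a statement about direct summands — which is precisely where the hypothesis that $\cor$ is a field is used, both to split $L$ into its shifted cohomologies and to split off a copy of $\cor$ from a nonzero $H^{j_0}(L)$ — and the remark that passing to a direct summand does not enlarge the micro-support, for which I would argue directly from the definition rather than search for a ready-made reference.
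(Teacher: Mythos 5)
Your proposal is correct and follows essentially the same route as the paper: localize to make $L_X$ constant, use that $\cor$ is a field to split off a (shifted) copy of $\cor$ from $L$, and conclude via the fact that the micro-support of a direct summand is contained in that of the whole object (for which the paper cites \cite{KS90}*{Prop.~5.1.3} while you argue directly from the definition). The only cosmetic difference is that the paper reduces to $L$ concentrated in a single degree before splitting, whereas you keep track of the shift $j_0$ explicitly; both are handled by the shift-invariance of $\SSi$.
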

	\begin{proof} 
		The problem is local and we may assume that $K=L_X$ is the constant sheaf associated with $L\in\Derb(\cor)$. Then $L\simeq\oplus_j H^j(L)\,[-j]$.
		Since the micro-support of a finite direct sum is the union of the summands' micro-supports (which is a direct consequence of the definition of the micro-support \cite[Def.~5.1.2]{KS90}), we may assume that $L\in\md[\cor]$. In this case, there exists $L'\in\md[\cor]$ such that $L\simeq\cor\oplus L'$  (since every vector space has a basis), hence
		$L_X\simeq \cor_X\oplus L'_X$. Therefore, $L_X\tens F\simeq F\oplus G$, with $G=L'_X\tens F$. Since again 
		$\SSi(F\oplus G)=\SSi(F)\cup\SSi(G)$,  the result for $L_X\tens F$ follows. The case of $\rhom(L_X,F)$ is similar.  
	\end{proof}

\providecommand{\bysame}{\stLeavevmode\hbox to3em{\hrulefill}\thinspace}

\vspace*{1cm}
\noindent
\begin{tabular}{cc}
\parbox[t]{16em}
{\scriptsize{
		Andreas~Hohl\\
		Université Paris Cité and Sorbonne Université,\\
		CNRS, IMJ-PRG, F-75013 Paris, France\vspace{0.1cm}\\
		\textit{Current address:}\\
		Fakultät für Mathematik, Technische Universität Chemnitz, 09107 Chemnitz, Germany\\
		e-mail: andreas.hohl@math.tu-chemnitz.de\\
		https://www.andreashohl.eu}}
		%\orcid{0000-0002-9335-067X}
		&
\parbox[t]{16em}{\scriptsize{
		Pierre Schapira\\
		Sorbonne Universit{\'e},\\
		CNRS IMJ-PRG Paris, France\\\
		e-mail: pierre.schapira@imj-prg.fr\\
		http://webusers.imj-prg.fr/\textasciitilde pierre.schapira/
		%\orcid{0000-0002-4494-4009}
}}
\end{tabular}

\end{document}